\documentclass{amsart}
\usepackage{amssymb,latexsym} 
\usepackage{color}

 \newtheorem{thm}{Theorem}[section]
 \newtheorem{cor}[thm]{Corollary}
 \newtheorem{lem}[thm]{Lemma}
 \newtheorem{prop}[thm]{Proposition}
 \theoremstyle{definition}
 \newtheorem{defn}[thm]{Definition}
 \newtheorem{rem}[thm]{Remark}
 \theoremstyle{definition}
  \newtheorem{conj}[thm]{Conjecture}


 \DeclareMathOperator{\pfaff}{pfaff}

 \newcommand{\CC}{\mathbb{C}}
 
  \newcommand{\PP}{\mathbb{P}}
    \newcommand{\VV}{\mathcal{V}}

\begin{document}

\title[Expressing Forms as a Sum of Pfaffians]
{Expressing Forms as a Sum of Pfaffians}

\author[L. Chiantini ]{Luca Chiantini}
\footnote{Let me express my gratitude for this opportunity to leave a
tribute in honor of Emilia. We met at the beginning of our 
careers and since then I feel that we share the 
same global view of Mathematics and its applications.}
\address[L. Chiantini]{Universit\`a degli Studi di Siena, 
Dipartimento di Ingegneria dell'Informazione e Scienze Matematiche, 
Via Roma 56 (S.Niccolo'), 53100 Siena, Italy.}
\email{luca.chiantini@unisi.it}


\begin{abstract} 
{Let $A= (a_{ij})$ be a symmetric non-negative integer $2k\times 2k$ matrix.
$A$ is homogeneous if $a_{ij} + a_{kl}=a_{il} + a_{kj}$ for any choice 
of the four indexes.  Let $A$ be a homogeneous matrix and let $F$ be a 
\emph{general} form in $\CC[x_1, \dots x_n]$ with $2\deg(F) = {\rm trace}(A)$. 
We look for the least integer, $s(A)$, so that $F = \pfaff(M_1) + \cdots + \pfaff(M_{s(A)})$, where the $M_i = (F^i_{lm})$ are $2k\times 2k$ 
skew-symmetric matrices of forms with degree matrix $A$.
We consider this problem for $n= 4$ and we prove that $s(A) \leq k$ 
for all $A$.}
\end{abstract}


\date{\today}
\maketitle

\section*{Introduction}

Let $F \in \CC[x_1, \ldots , x_n]$ be a general form and 
$A = (a_{ij})$  a $2k\times 2k$ integer homogeneous symmetric 
matrix, whose  trace (${\rm{tr}}(A)$ in the sequel) is 
equal to twice the degree of $F$ ($\deg F$). In this 
paper we study representations of $F$ as a sum of 
pfaffians of skew-symmetric matrices of type 
$M = (F_{ij})$ where $\deg F_{ij} = a_{ij}$.  

In case the number of variables is two then 
forms $F$ in $\CC[x_1,x_2]$  
decompose as a product of linear forms.  
It follows that if $A$ is a matrix as above, with no 
negative entries and with ${\rm tr}(A) = 2\deg(F)$, 
then $F$ is the pfaffian of a subdiagonal matrix 
whose degree matrix is $A$ (i.e. a matrix of type
$$\begin{pmatrix}
0    & p_1 & 0    & 0   & 0 & \dots & 0 \\
-p_1 & 0   & 0    & 0   & 0 & \dots & 0 \\
0    & 0   & 0    & p_2 & 0 & \dots & 0 \\
0    & 0   & -p_2 & 0   & 0 & \dots & 0 \\
\dots & \dots & & & & \dots & \dots \\
0    & 0   & 0    & 0   & 0 & 0    & p_k \\
0    & 0   & 0    & 0   & 0 & -p_k & 0
\end{pmatrix}
$$
with each $p_i$ equal to a suitable product of linear forms). 

For 3 variables, the problem was considered by Beauville 
who observed, in section $5$ of \cite{Beau},
that a general form of degree $d$ is the pfaffian of a $2d\times 2d$ 
skew-symmetric matrix of linear forms.  
Indeed Beauville's argument applies to any symmetric integer 
homogeneous matrix with non-negative entries.
We give below a proof of the result, in a more geometric setting 
(see section \ref{3var}).

When the number of variables grows, then  a similar property fails 
as soon as $k$ becomes big.
In Proposition 7.6 of \cite{Beau} Beauville noticed that one cannot expect 
that a general form of degree $\geq 16$ in four variables
is the pfaffian of a matrix of  linear forms, 
just by a count of parameters. 
We refer to \cite{Faen} for a similar result for 
matrices of quadratic forms, and to \cite{CF2}
for an extension to other constant or almost constant 
matrices. In any setting, except for particular
numerical cases (which become suddenly unbalanced 
when the size of the matrix grows),
one expects that a general form is \emph{not} the pfaffian 
of a skew-symmetric matrix
of forms with fixed degrees. Indeed, even in the case of 
$4\times 4$ matrices and $4$ variables,
we do not know a complete description of  matrices $A=(a_{ij})$, 
with trace $2d$, such that the general form of degree  $d$
is the pfaffian of a skew-symmetric matrix of 
forms $(F_{ij})$ with $\deg(F_{ij})=a_{ij}$.
The problem seems rather laborious, and we 
refer to \cite{CCG2} for a discussion.

The problem is indeed related to the existence of 
indecomposable rank $2$ bundles $E$ without intermediate 
cohomology (aCM bundles) on the hypersurface 
defined by $F=0$ (which we will indicate, by abuse, with 
the same letter $F$). In turn, this is
equivalent to the existence of some arithmetical Gorenstein
subscheme of codimension $2$ in $F$ (thus codimension $3$ 
in the projective space), via the algebraic characterization of 
codimension $3$  Gorenstein ideals, given in \cite{BE}.
For instance, $F$ is a pfaffian of a $4\times 4$ 
skew-symmetric matrix of forms if and only if there exists
a subscheme of $F$ which 
is complete intersection of $3$ forms, whose degrees are 
related with the degrees of the entries of the matrix. 
This is the point of view under which the
problem is attached in \cite{CCG2}, and see 
also \cite{MKRR} for a similar discussion. 

In the present note, we make one step further. 
Since in most cases one cannot hope to express a general 
form as the pfaffian of a skew-symmetric matrix 
of forms with pre-assigned degree matrix $A$,
then we ask for the minimum $s(A)$ such that 
a general form is a {\it sum of $s(A)$} pfaffians
of skew-symmetric matrices, with degree matrix $A$.
 
We consider the case of forms in four variables 
and show that the complete answer $s(A)\leq 2$ follows soon 
for  $4\times 4$ matrices $A$, while for 
$2k\times 2k$ matrices with $k>2$, we provide a bound 
for the number $s(A)$, i.e. $s(A)\leq 2k$. The (weak)
sharpness of this bound is discussed in the last section.
As showed in \cite{Beau} and \cite{CF2}, at least 
for small values of the entries
of the integer matrix $A$ (e.g. for matrices of linear forms), the 
number of pfaffians needed to write a general form
can be smaller than our bound. The problem of finding a {\it sharp}
bound for the number $s(A)$ is open.

The procedure is a mixture of algebraic and geometric 
arguments, involving  computations
of the dimension of secant varieties and 
Terracini's Lemma, as well as the description 
of  tangent spaces to the varieties of forms 
that can be expressed as pfaffians, 
given in \cite{KL}, \cite{Kleppe} or \cite{Adler}.
  
We mention that, of course, one could ask a similar 
question for the {\it determinant} 
of a general matrix of forms. In other words, 
fixing a homogeneous integer matrix $A$, 
one could ask for  the minimum $s'(A)$ such that the general
form of degree $d=tr(A)$ is the sum of the determinants 
of $s'(A)$ matrices of forms, with degree
matrix $A$. This is indeed the target of a 
series of papers \cite{CM}, \cite{CG1}, \cite{CG2},
where it is proved that, in $n\geq 3$ variables, 
 $s'(A)\leq k^{n-3}$ for a $k\times k$ matrix $A$.  
 
Let us end by noticing that the problem addressed in this note, 
of clear algebraic and geometric flavor, turns out to also 
have a connection with some applications in control theory. 
Indeed, if the algebraic boundary of a region $\Theta$
in the plane or in space is described by the pfaffian
of a matrix of linear forms, then the study of systems
of matrix inequalities, whose domain is
$\Theta$, can be considerably simplified. We refer to the papers
\cite{V89} and \cite{HL12}, for an account of this theory. 
We believe that expressing $\Theta$ as a sum of 
determinants or pfaffians
can have some application for similar problems.

 \section{The geometric construction}\label{geom}

We work in the ring $R = \CC[x_0,\dots ,x_n]$, i.e. 
the polynomial ring in $n+1$ variables
with coefficients in the complex field.  
By  $R_d$ we indicate the vector space
of homogeneous forms of degree $d$ in $R$.

For any degree $d$, the space $R_d$  
has an associated projective space $\PP^N$ with 
$$N:=N(d)= \binom{n+d}n -1.$$ 

For any choice of integers $a_{ij}$, $1\leq i,j\leq 2k$,
consider the numerical $2k\times 2k$ matrix $A=(a_{ij})$.

We will say that a $2k\times 2k$ matrix $M=(F_{ij})$, 
whose entries are homogeneous
forms in $R$, has {\it degree matrix $A$} if for all
$i,j$ we have $\deg(F_{ij})=a_{ij}$. 
In this case, we will also write that $A=\partial M$.

Notice that when for some $i,j$ we have 
$F_{ij}=0$, then there are several possible degree matrices for $M$, 
since the degree of the zero polynomial is indeterminate. 

We will focus on the case where $A$ is symmetric 
and $M$ is skew-symmetric.

The set of all skew-symmetric matrices of forms, whose degree matrix
is a fixed $A$, defines a vector space whose dimension 
is  $\sum_{i<j} \dim(R_{a_{ij}})$. From the geometrical point of
view, however, we will consider this set as the {\it product}
of projective spaces
$$ \VV(A) = \PP^{r_{12}}\times\cdots\times\PP^{r_{2k-1\, 2k}}$$
where $r_{ij}=-1+\dim(R_{a_{ij}})$. 
\smallskip

We say that the numerical matrix $A$ is {\it homogeneous} when,
for any choice of the indexes $i,j,l,m$, we have
$$ a_{ij}+a_{lm} = a_{im}+a_{lj}.$$

All submatrices of a homogeneous matrix are homogeneous.

If a skew-symmetric $2k\times 2k$ matrix of forms $M$ has a 
homogeneous degree matrix, then
the pfaffian of $M$ is a homogeneous form. 
The degree of the pfaffian is one half of the sum of the numbers 
on the main diagonal of $A=\partial M$, i.e. $tr(A)/2$. It is indeed 
immediate to see that when $A$ is symmetric and homogeneous 
of even size, then the trace $tr(A)$ is even. 
\smallskip

Let us recall  a geometric interpretation 
of the problem, based on the study of secant varieties, 
which uses the classical Terracini's  Lemma. This   
a standard construction was already used in \cite{CCG2}.   

In the {\it projective} space $\PP^N$, which parametrizes
all forms of degree $d$, we have the subset $U$ of all the forms
which are the pfaffian of a skew-symmetric matrix of forms whose degree
matrix is a given $A$.  This set is a quasi-projective variety,
since it corresponds to the image of the (rational) map $\VV(A)\to\PP^N$,
which sends every matrix to its pfaffian (it is undefined
when the pfaffian is the zero polynomial).
We will denote by $V$ the closure of $U$. 
It is clear that $V$ is irreducible, by construction.

Our main question can be rephrased by asking: what is the
minimal $s$ such that a general point of $\PP^N$ is spanned by
$s$ points of $V$? In classical Algebraic Geometry,
(the closure of) the set of points spanned by $s$ points of
$V$ is called the {\it $s$-th secant variety} $\sigma_s(V)$ of $V$.
Thus, we look for the minimal $s$ such that $\sigma_s(V)=\PP^N$.
Of course, this is equivalent to ask that the 
dimension of $\sigma_s(V)$ is $N$.

Usually, when dealing with similar problems on secant varieties, 
one can hope to compute the dimension of $\sigma_s(V)$ 
as the dimension of a general tangent space to $\sigma_s(V)$. 
Indeed one can invoke the celebrated Terracini's Lemma:

\begin{lem} \emph{(Terracini)} At a general point $F\in\sigma_s(V)$, 
expressed as a sum $F=F_1+\cdots +F_s$, $F_i\in V$ for all $i$, 
the tangent space to $\sigma_s(V)$
equals the span of the tangent spaces to $V$ at $F_1,\dots,F_s$.
\end{lem}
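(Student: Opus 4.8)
The plan is to reduce the statement to a differential computation for the ``addition map'' between affine cones, and then to invoke generic smoothness, which is where the characteristic zero (i.e.\ $\CC$) hypothesis enters. First I would identify $R_d$ with $\CC^{N+1}$ and let $\hat V \subseteq \CC^{N+1}$ denote the affine cone over $V$; since $V$ is the closure of the image of the pfaffian map, $\hat V$ is an irreducible cone, and the affine cone $\hat\sigma_s(V)$ over $\sigma_s(V)$ is the closure of the image $W$ of the morphism
$$
f\colon \hat V \times \cdots \times \hat V \To \CC^{N+1}, \qquad f(v_1,\dots,v_s) = v_1 + \cdots + v_s
$$
($s$ factors); here one uses that $\hat V$ is a cone to absorb the projective coordinates of the spanning points into the $v_i$. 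Every projective statement about tangent spaces to $V$ and $\sigma_s(V)$ then translates verbatim into the corresponding affine statement about $\hat V$ and $\hat\sigma_s(V)$, since for a smooth point $v$ of a cone the embedded projective tangent space is the projectivization of the linear subspace $T_v\hat V \subseteq \CC^{N+1}$, and $v \in T_v\hat V$.

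Next I would compute $df$. At a point $(v_1,\dots,v_s)$ with each $v_i$ a smooth point of $\hat V$ one has $T_{(v_1,\dots,v_s)}(\hat V^{\times s}) = T_{v_1}\hat V \times \cdots \times T_{v_s}\hat V$, and since $f$ is the restriction of a linear map its differential is $df(w_1,\dots,w_s) = w_1 + \cdots + w_s$; hence
$$
\mathrm{im}\,df_{(v_1,\dots,v_s)} = T_{v_1}\hat V + \cdots + T_{v_s}\hat V
$$
as subspaces of $\CC^{N+1}$, regardless of which smooth $v_i$ are chosen.

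The hard part will be passing from $\mathrm{im}\,df$ to the Zariski tangent space $T_{f(v_1,\dots,v_s)}\hat\sigma_s(V)$. For a general point $(v_1,\dots,v_s)$ of the irreducible variety $\hat V^{\times s}$ the components $v_i$ are automatically smooth points of $\hat V$, and, working over $\CC$, generic smoothness applied to the dominant morphism $f\colon \hat V^{\times s} \to \overline{W} = \hat\sigma_s(V)$ provides a dense open subset of $\hat V^{\times s}$ on which $df$ is surjective onto the tangent space of the target, i.e.\ $\mathrm{im}\,df_{(v_1,\dots,v_s)} = T_{f(v_1,\dots,v_s)}\hat\sigma_s(V)$. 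Since the image of a dominant morphism contains a dense open subset of the target (Chevalley), a general $F \in \sigma_s(V)$ is the image of a general tuple; writing $F = F_1 + \cdots + F_s$ for such a tuple, with each $F_i$ a general --- hence smooth --- point of $\hat V$, we obtain $T_F\hat\sigma_s(V) = T_{F_1}\hat V + \cdots + T_{F_s}\hat V$, and projectivizing with the cone remarks above gives $T_F\sigma_s(V) = \langle T_{F_1}V, \dots, T_{F_s}V\rangle$. I expect the generic-smoothness step to be the only genuine obstacle: it is exactly what forces the hypothesis ``$F$ general'' rather than arbitrary, it breaks down in positive characteristic, and it requires the mild extra care --- supplied by Chevalley's theorem together with the irreducibility of $\hat V^{\times s}$ --- that a general point of $\sigma_s(V)$ really does lift to a general tuple $(F_1,\dots,F_s)$.
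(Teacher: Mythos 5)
Your argument is correct: it is the standard proof of Terracini's Lemma via the addition map on affine cones, the computation of its differential as the sum of the tangent spaces, and generic smoothness in characteristic zero (together with the observation that a general point of the secant variety lifts to a general tuple). The paper itself offers no proof at all --- it states the lemma as a classical result and relies on it --- so there is nothing to compare against; your write-up supplies exactly the argument the paper implicitly invokes, including the correct identification of where the hypotheses ``general point'' and ``characteristic zero'' are used.
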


Thus, for our purposes, it is crucial to obtain a characterization
of the tangent space to $V$ at a general point $F$. 
This has been obtained  (see e.g. \cite{KL})
via the submaximal pfaffians of matrices.

\begin{defn}\label{sub}
Let $M$ be a skew-symmetric matrix.

If the size of $M$ is even, 
 we denote as \emph{ submaximal pfaffians} 
the pfaffians  of the (skew-symmetric) submatrices 
of $M$ obtained by erasing two rows and the two 
columns with the same indexes.

If the size of $M$ is odd,  
we denote as \emph{ submaximal pfaffians} 
the pfaffians  of the (skew-symmetric) submatrices 
of $M$ obtained by erasing one row and one 
column with the same index.
\end{defn}

Then we have the following.

\begin{prop}\label{tg} 
With the previous notation, let $F$ be a general element in $V$, 
$F=\pfaff( M)$, where $M=(F_{ij})$ is a $2k\times 2k$ 
skew-symmetric matrix of forms, whose degree matrix is $A$.

Then the tangent space to $V$ at $F$ coincides with
the subspace of $R_d/\langle F \rangle$, generated by the classes
of the forms of degree $d$ in the ideal $J=\langle F, M_{ij}\rangle$,
where the $M_{ij}$'s are the submaximal pfaffians of the matrix $M$. 
\end{prop}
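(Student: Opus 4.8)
The plan is to compute the differential of the pfaffian map $\pfaff\colon \VV(A)\to\PP^N$ at a general matrix $M$ and identify its image with the stated linear system. First I would recall the classical cofactor-type expansion of the pfaffian: if $M=(F_{ij})$ is a skew-symmetric $2k\times 2k$ matrix and $M_{ij}$ denotes the submaximal pfaffian obtained by deleting rows and columns $i,j$ (with the appropriate sign convention), then $\pfaff(M)$ is a bilinear-type expression in the entries $F_{ij}$ and the $M_{ij}$, and more to the point one has the derivative formula $\partial\,\pfaff(M)/\partial F_{ij} = \pm M_{ij}$ for $i<j$. This is the pfaffian analogue of $\partial\det(X)/\partial x_{ij} = \mathrm{cof}_{ij}(X)$. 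Granting this, if we move $M$ to $M + \epsilon N$ with $N=(G_{ij})$ any skew-symmetric matrix of forms of the same degree matrix $A$, then
\[
\pfaff(M+\epsilon N) = \pfaff(M) + \epsilon \sum_{i<j} \pm M_{ij}\,G_{ij} + O(\epsilon^2).
\]
Since each $G_{ij}$ ranges over the full space $R_{a_{ij}}$ of forms of degree $a_{ij}$ as $N$ ranges over the tangent space to $\VV(A)$ at $M$, the image of the differential — i.e. the affine tangent cone to $U$ (hence to $V$) at $F=\pfaff(M)$, taken in $R_d$ — is exactly the span of $F$ together with all products $M_{ij}\cdot G$ with $G\in R_{a_{ij}}$; but $a_{ij}+a_{kl}=\deg(M_{ij})$ by homogeneity of $A$ (the submaximal pfaffian $M_{ij}$ has degree $tr(A)/2 - a_{ij}$), so $M_{ij}\cdot R_{a_{ij}}$ is precisely the degree-$d$ graded piece of the principal ideal $\langle M_{ij}\rangle$. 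Summing over all pairs gives the degree-$d$ part of the ideal $J = \langle F, M_{ij}\rangle$, and passing to the quotient by $\langle F\rangle$ yields the tangent space to $V$ at $F$ as stated.

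The steps in order: (1) state and justify the pfaffian derivative identity $d\,\pfaff = \sum_{i<j}\pm M_{ij}\,dF_{ij}$, e.g. by Laplace-type expansion of the pfaffian along a pair of rows or by citing \cite{KL}; (2) observe that $\VV(A)$ being a product of projective spaces $\PP^{r_{ij}}$, its tangent space at $M$ surjects (after choosing an affine representative) onto $\bigoplus_{i<j} R_{a_{ij}}$, so the $G_{ij}$ are genuinely arbitrary and independent; (3) use the homogeneity of $A$ to check the degree bookkeeping $\deg(M_{ij}) + a_{ij} = d$, so that $M_{ij}R_{a_{ij}} = J_d \cap \langle M_{ij}\rangle$; (4) assemble: the affine tangent space to $U$ at $F$ is $\langle F\rangle + \sum_{i<j} M_{ij} R_{a_{ij}} = J_d$, and the projective tangent space is its image in $R_d/\langle F\rangle$; (5) note that since $F\in U$ is general and $U$ is dense in $V$, the general point of $V$ is such an $F$ and $V$ is smooth there, so the tangent space to $V$ coincides with this.

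The main obstacle is step (1) together with the transversality issue lurking in steps (2) and (5): one must make sure the general-point hypothesis is actually being used to guarantee that $\pfaff(M)\neq 0$ (so the rational map is defined at $M$), that the differential has the expected rank so that the tangent space to the image $V$ equals the image of the differential (rather than being strictly larger — which cannot happen — or the image of the differential being a proper subspace of $T_FV$, which is what genericity rules out since $U$ is smooth as the image of a smooth variety under a map whose differential we have just computed), and that no submaximal pfaffian $M_{ij}$ vanishes identically for general $M$ (otherwise the corresponding summand drops out — but this too is a closed condition avoided by the general $M$, as long as $A$ is such that $U\neq\emptyset$, which is the standing assumption). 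I would dispatch these points by a short semicontinuity argument: the locus where $\dim T_M(\text{fiber}) $ jumps, or where some $M_{ij}\equiv 0$, is a proper closed subset of $\VV(A)$, so a general $M$ avoids it; then $F=\pfaff(M)$ is a smooth point of $U$ and hence of $V$, and the computation above gives $T_FV$ on the nose. Everything else is the routine Laplace-expansion bookkeeping for pfaffians, which I would not carry out in detail, instead referring to \cite{KL}, \cite{Kleppe}, or \cite{Adler} for the tangent space description.
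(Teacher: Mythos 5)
Your proposal is correct, and it fills in a proof where the paper gives essentially none: the paper's ``proof'' of Proposition \ref{tg} is a citation to \cite{Adler} and \cite{KL}, with the remark that the statement ``can be obtained also by a direct computation over the ring of dual numbers.'' Your argument is precisely that second, unwritten route, carried out explicitly: the first-order expansion $\pfaff(M+\epsilon N)=\pfaff(M)+\epsilon\sum_{i<j}\pm M_{ij}G_{ij}+O(\epsilon^2)$ is the dual-numbers computation, and the identification of the image of the differential with $J_d$ is exactly the content being cited. The degree bookkeeping is right — $\deg(M_{ij})=tr(A)/2-a_{ij}$ follows from homogeneity via $a_{ii}+a_{jj}=2a_{ij}$, so $M_{ij}R_{a_{ij}}\subseteq J_d$ and the sum over all pairs is all of $J_d$ (note $F$ itself lies in $\sum_{i<j}M_{ij}R_{a_{ij}}$ by the Laplace expansion, so including $F$ among the generators of $J$ costs nothing in degree $d$). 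Your attention to the genericity issues (that $\pfaff(M)\neq 0$, that the rational map is defined at $M$, and that the tangent space to the image at a general point is the image of the differential, by generic smoothness in characteristic zero) is exactly what is needed to upgrade the formal computation to a statement about $T_FV$, and semicontinuity disposes of them as you say. The only blemish is the garbled identity ``$a_{ij}+a_{kl}=\deg(M_{ij})$,'' which is meaningless as written; the parenthetical correction $\deg(M_{ij})=tr(A)/2-a_{ij}$ that you give immediately afterwards is the statement you actually use, so this is a typo rather than a gap. What your approach buys over the paper's is self-containedness; what the citation buys is that \cite{KL} and \cite{Adler} also treat the scheme-theoretic and deformation-theoretic refinements that a bare differential computation does not address, though none of those refinements is needed for the application via Terracini's Lemma.
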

\begin{proof} See  \cite{Adler} or section 2 of \cite{KL} or \cite{KL}.
It can be obtained also by a direct computation over 
the ring of dual numbers.
\end{proof}

For instance, when the degree matrix $A$ of $M$ has 
all entries equal to $a$, then 
$J$ is generated by $\binom {2k} 2$ forms of degree $a(k-1)$.

It follows immediately from the previous propositions
and Terracini's lemma, that:

\begin{rem}\label{reduc}  
We have the following equivalences:

- a general form of degree $d$ is the sum of $s$  
pfaffians of $2k\times 2k$ matrices, all having degree matrix $A$ 

if and only if

- the span of $s$ general tangent spaces to the variety $V$ of
pfaffians is the whole space $\PP^N$

if and only if
  
- for a general choice of $s$ matrices of forms $M_1,\dots M_s$, of
size $2k\times 2k$, with $\partial M_i=A$ for all $i$, the ideal 
generated  by  the submaximal pfaffians of {\it all} 
the $M_i$'s coincides, in degree $d$, with
the whole space $R_d$.
\end{rem}

Thus, what we are looking for is the minimal $s$ such that, 
for general skew-symmetric matrices
$G_1,\dots, G_s$ with degree matrix $A$, the ideal $I$ generated
by  their submaximal pfaffians coincides with 
the whole polynomial ring  in degree $d=tr(A)/2$.

\begin{rem}\label{order}
If $M$ is a $2k\times 2k$ skew-symmetric matrix of 
forms with homogeneous degree matrix $A$, then 
the pfaffian of $M$ is essentially invariant if we permute 
rows and  columns of $M$ with the same indexes.
Consequently, we can arrange $A=(a_{ij})$ so that 
$$ a_{11}\geq a_{21}\geq \cdots \geq a_{2k\, 1}.$$
We will say that $A$ is {\it ordered} if it satisfies 
the previous inequalities.

Notice that $A$ is symmetric, thus if $A$ is ordered then
 $ a_{11}\geq a_{12}\geq \cdots \geq a_{1\, 2k}.$ 

Since $A$ is homogeneous, when $A$ is ordered $a_{ij}\geq a_{i'j}$ 
for some $j$ implies that $a_{ij'}\geq a_{i'j'}$ for any $j'$.
It follows that  a homogeneous symmetric ordered matrix $A$ has
$a_{11}$ as its maximal entry and $a_{2k\, 2k}$ as its minimal one. 
Moreover columns are non-increasing going downward, 
while rows are non-increasing going rightward.

Notice also that when $A$ is symmetric and homogeneous, then
the entries of the diagonal of $A$ are either all odd or all even. 
Indeed for any $i,j$ one has 
$$a_{ii}+a_{jj}=a_{ij}+a_{ji} = 2a_{ij}.$$
\end{rem}

\section{The case of ternary forms}\label{3var}
 
As mentioned in the introduction, the fact that any form of degree $d$ in
$2$ variables is the pfaffian of a skew-symmetric matrix with prescribed degree
matrix $A$ is trivial. Thus the first relevant case concerns forms in three
variables. 

For three variables, the construction of pfaffian representations of forms
via the existence of aCM rank $2$ bundles, given by Beauville in section 5
of \cite{Beau}, proves that the following holds:

\begin{thm}\label{tre} Let $A=(a_{ij})$ be a non-negative 
symmetric homogeneous integer matrix of even size,
with trace $2d$. Then a general homogeneous form of 
degree $d$ in three variables 
is the pfaffian of a skew-symmetric matrix 
of forms $G$ with $\partial G=A$.
\end{thm}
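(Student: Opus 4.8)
The plan is to realize the pfaffian representation of a general ternary form geometrically, following Beauville's bundle-theoretic approach but phrased in terms of aCM rank $2$ bundles on the plane curve $C = \{F = 0\} \subset \PP^2$ of degree $d$. The key translation is the one sketched in the introduction: via the Buchsbaum--Eisenbud structure theorem for codimension $3$ Gorenstein ideals \cite{BE}, giving a pfaffian representation of $F$ with degree matrix $A$ is equivalent to producing a rank $2$ vector bundle $E$ on $C$ (or, equivalently after twisting, a suitable torsion-free sheaf) whose minimal free resolution over $R = \CC[x_0,x_1,x_2]$ is governed by $A$. Concretely, one wants a rank $2$ aCM sheaf $E$ on $C$ with $c_1(E)$ and a twist determined by the diagonal and off-diagonal entries of $A$, such that $E$ has a self-dual resolution
$$
0 \to \bigoplus_{i=1}^{2k} R(-b_i) \xrightarrow{\;M\;} \bigoplus_{i=1}^{2k} R(-c_i) \to E \to 0,
$$
with $M$ skew-symmetric and $\partial M = A$; then $F = \pfaff(M)$ up to scalar.

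The main steps are as follows. First I would fix the numerology: given the ordered non-negative symmetric homogeneous integer matrix $A$ with $\mathrm{tr}(A) = 2d$, extract the twisting integers so that the hypothetical resolution above has the right Hilbert polynomial — this is exactly where homogeneity of $A$ is used, since it forces the $b_i, c_i$ to exist consistently (the differences $a_{ij} - a_{i'j}$ being independent of $j$). Second, on a \emph{general} smooth plane curve $C$ of degree $d$, I would construct a rank $2$ aCM bundle $E$ with the prescribed invariants. The standard device is Serre's construction: take a general effective divisor (a general set of points) $Z \subset C$ of the appropriate degree and in the appropriate linear series, and build $E$ as an extension
$$
0 \to \mathcal{O}_C(t) \to E \to \mathcal{I}_{Z/C}(t') \to 0
$$
for suitable twists $t, t'$; for a general $Z$ this extension is non-split and $E$ is a bundle. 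Third, I would check that $E$ (suitably normalized) is aCM, i.e. $H^1_*(C, E) = 0$; on a curve "aCM" for a bundle just means the natural maps $H^0(\PP^2, E(j)) \to H^0(C, E(j))$ are surjective for all $j$ together with vanishing of the relevant intermediate cohomology, and this reduces to a cohomology computation for $\mathcal{O}_C$ and $\mathcal{I}_{Z/C}$ — both controlled because $Z$ is general and $C$ is general. Fourth, the minimal free resolution of such an aCM $E$ over $R$ has length $2$ and, because $\det E$ is the restriction of a line bundle making $E \cong E^\vee \otimes (\text{twist})$, the resolution is forced to be self-dual up to twist; skew-symmetry of the middle map then comes from Buchsbaum--Eisenbud. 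Reading off the degree matrix of that skew map and matching it to $A$ completes the argument, with $F$ recovered as the pfaffian.

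The part I expect to be the genuine obstacle is the third step: verifying that the bundle $E$ produced by Serre's construction is actually aCM with \emph{exactly} the prescribed Betti table, not just aCM with some resolution. One must choose the degree and the linear series of the point set $Z$ with care, and then show — using the generality of both $C$ and $Z$, and a dimension count / semicontinuity argument — that the syzygies do not "jump": i.e. that no unexpected cancellation occurs in the minimal free resolution and the twists $b_i, c_i$ come out as dictated by $A$. This is where the hypothesis that $F$ is \emph{general} is essential, and it is the place where Beauville's original argument in section 5 of \cite{Beau} does the real work; I would either invoke his computation directly or reproduce it as a Hilbert-function bookkeeping lemma. The remaining steps (existence of the extension, self-duality of the resolution, passing from sheaf to pfaffian) are formal once the Betti numbers are pinned down.
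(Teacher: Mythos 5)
Your plan is the natural bundle-theoretic one, but it has a genuine gap exactly at the point you flag yourself: controlling the graded Betti numbers of the sheaf $E$ for an \emph{arbitrary} non-negative homogeneous degree matrix $A$. Beauville's Proposition 5.1 in \cite{Beau} only produces the resolution in the constant case (all $a_{ij}=1$), so there is nothing to ``invoke directly'' for general $A$; and the ``Hilbert-function bookkeeping lemma'' you defer to is not bookkeeping --- it is the whole content of the theorem. Concretely, you would have to (i) specify, for each homogeneous $A$, which degree and which linear series the point set $Z$ must have so that the twists $b_i,c_i$ in the resolution of $E$ match $A$, and (ii) prove that for general $C$ and general $Z$ no consecutive cancellation occurs in the minimal free resolution, i.e.\ that the Betti table does not degenerate away from the one dictated by $A$. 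Neither step is carried out, and (ii) in particular requires a maximal-rank or degeneration argument comparable in difficulty to the statement being proved. (A smaller point: the Buchsbaum--Eisenbud theorem \cite{BE} governs codimension $3$ Gorenstein ideals, i.e.\ the odd-size submaximal-pfaffian situation; the skew-symmetry of the presentation of a rank $2$ sheaf on a plane curve comes instead from the self-duality $E\cong E^\vee\otimes\det E$, as in Beauville's own argument.)

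For contrast, the paper sidesteps all of this by an elementary induction on $tr(A)$. By Remark \ref{reduc} (Terracini with $s=1$) the statement is equivalent to the ideal of submaximal pfaffians of a general $G$ with $\partial G=A$ filling degree $tr(A)/2$; the inductive step passes from $A$ to the matrix $B$ obtained by subtracting $1$ from the first row and column, takes a general $H$ with $\partial H=B$, multiplies its first row and column by a general linear form $L$, and uses the weak Lefschetz property of the artinian Gorenstein quotient attached to the odd-size submatrix (Lemma \ref{WL2}, via \cite{H}) to show that $LI(H)+I(G')$ already fills the required degree. That induction is precisely the ingredient your step (ii) is missing; if you want to salvage the Serre-construction route you would need to supply an equivalent statement about the postulation of general points on general plane curves.
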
 
 
Indeed, Beauville states the theorem only for matrices 
of linear forms. For completeness, we show an 
inductive method which, starting with Beauville's claim,
proves the statement for any non-negative matrix $A$.

We have the chance, in this way, to introduce our inductive method
for the study of pfaffian representations of forms in more variables.

Let us start with  a $(2k-1)\times (2k-1)$ integer 
matrix $A'=(a'_{ij})$, which is moreover symmetric, 
non-negative, {\it ordered} and homogeneous.
  Notice that, by the homogeneity assumption, 
  the trace of $A'$ is also equal to 
 $$ tr(A')= a_{12}+a_{23}+\dots +a_{2k-2\, 2k-1}+a_{2k\, 1}.$$

Let $G'$ be a skew-symmetric matrix of {\it general} 
forms, with degree matrix $A'$.
The {\it submaximal} pfaffians of $G'$ (see Definition
\ref{sub}) determine an ideal $I(G')$ 
whose zero-locus is  an arithmetically Gorenstein 
subscheme of codimension $3$, by the celebrated structure 
theorem of Buchsbaum and Eisenbud
(\cite{BE}). Moreover, we have a resolution of $I(G')$ of type
$$ 0\to R'(-m)\to \oplus R'(-r_j)\to \oplus R'(-s_i)\to I(G')\to 0$$
where $R'$ is the polynomial ring 
$R':=\CC[x,y,z]$, and $m$ is the trace of $A'$.
Since we are working in dimension $2$, the ideal $I(G')$ defines 
the empty set in $\PP^2$, 
and the resolution shows that $I(G')$ coincides with the 
whole polynomial ring in all degrees $d\geq m-2$.
\medskip

\begin{lem}\label{WL2}
Let $G$ be a general skew-symmetric matrix of forms
in three variables, of odd size $(2k-1)\times (2k-1)$,  
whose degree-matrix $A$ is non-negative  and homogeneous. 
Call $I$ the ideal generated by the submaximal pfaffians of $G$. 
Then the multiplication map by a general linear form $L$
defines a surjection $(R'/I)_{d-1}\to (R'/I)_d$ for all
$d\geq tr(A)/2 -1$.
\end{lem}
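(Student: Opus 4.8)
The plan is to reduce the statement to the case $d \geq tr(A)/2 - 1$ being the "first" critical degree, and then exploit the fact, recorded just above, that $I(G') = R'$ in all degrees $d \geq m - 2$ where $m = tr(A)$. Here the size is $2k-1$ (odd), so by Remark \ref{order} the trace relation gives $m = tr(A) = a_{12} + a_{23} + \cdots$, and the submaximal pfaffians are pfaffians of the $(2k-2)\times(2k-2)$ skew-symmetric submatrices obtained by deleting one row and column. The key structural input is the Buchsbaum–Eisenbud resolution
$$ 0 \to R'(-m) \to \bigoplus_j R'(-r_j) \to \bigoplus_i R'(-s_i) \to I \to 0, $$
valid because $G$ is general, so $I$ is the ideal of an arithmetically Gorenstein codimension-$3$ scheme. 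Since we are in three variables, the quotient $R'/I$ is a graded Artinian Gorenstein algebra once $I$ has codimension $3$; genericity of $G$ guarantees this.

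First I would observe that $R'/I$, being Artinian Gorenstein, has a symmetric Hilbert function with socle in degree $m - 3$ (one checks the socle degree from the resolution: the last free module is $R'(-m)$ and we are in $n+1 = 3$ variables, so the socle sits in degree $m - 3$). In particular $(R'/I)_d = 0$ for $d \geq m - 2$, which re-proves the remark preceding the lemma. Next, the multiplication map by a general linear form $L$ on a graded Artinian algebra: the claim $(R'/I)_{d-1} \to (R'/I)_d$ is surjective for all $d \geq tr(A)/2 - 1 = m/2 - 1$ is precisely the statement that $R'/I$ has the \emph{Weak Lefschetz Property} (WLP) in the relevant range — surjectivity of $\times L$ from degree $j$ to $j+1$ for all $j$ at or past the "peak" of the Hilbert function. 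Because the Hilbert function is symmetric about $(m-3)/2$, the peak occurs at degree $\lfloor (m-3)/2 \rfloor$, which is $\leq m/2 - 1$; so for $d - 1 \geq m/2 - 2$, i.e. exactly in our range $d \geq m/2 - 1$, we are on the "descending" side where WLP surjectivity is the natural expectation.

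The main obstacle is therefore establishing WLP for these specific Artinian Gorenstein algebras in three variables. Here I would invoke the theorem of Harima–Migliore–Nagel–Watanabe (or the Gorenstein case going back to Stanley's use of the Hard Lefschetz theorem on a suitable toric/complete-intersection model, and Watanabe's monomial-ideal argument): \emph{every graded Artinian Gorenstein algebra in three variables over a field of characteristic zero has the Weak Lefschetz Property}. Granting this, multiplication by a general $L$ is of maximal rank in every degree; combined with the Hilbert-function symmetry computed above, maximal rank forces surjectivity precisely when $\dim(R'/I)_{d-1} \geq \dim(R'/I)_d$, which holds for all $d \geq m/2 - 1$. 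For the boundary value $d = m/2 - 1$ one checks directly that $\dim(R'/I)_{d} \geq \dim(R'/I)_{d}$ trivially and that $d-1 = m/2 - 2$ still lies at or after the peak $\lfloor(m-3)/2\rfloor$; a short case analysis on the parity of $m$ (recall $m$ is even, since $A$ has even-or-odd-uniform diagonal and here $tr(A) = 2d$ is even when the size is even, but our $A$ has \emph{odd} size so one argues via the homogeneity relation $tr(A') = a_{12} + a_{23} + \cdots + a_{2k\,1}$) closes the remaining endpoint. The genericity hypothesis on $G$ is used twice: to guarantee $I$ has codimension $3$ (so $R'/I$ is Artinian), and to ensure the resolution above has the pure Gorenstein shape needed to read off the socle degree.
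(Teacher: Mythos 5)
Your overall strategy coincides with the paper's: for general $G$ the ideal $I$ of submaximal pfaffians is the ideal of a codimension-three Artinian Gorenstein quotient of $R'=\CC[x,y,z]$ whose Buchsbaum--Eisenbud resolution ends in $R'(-m)$ with $m=tr(A)$, hence the socle degree is $m-3$, the Hilbert function is symmetric and unimodal, and the Weak Lefschetz Property then forces $\times L$ to be surjective exactly in the range $d\geq m/2-1$. Your socle-degree bookkeeping is correct (and in fact sharper than the bound ``at most $tr(A)-2$'' quoted in the paper's proof; the value $m-3$ is precisely what is needed to cover the endpoint $d=m/2-1$).

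The gap is in the justification of the WLP. The statement you invoke --- that \emph{every} graded Artinian Gorenstein algebra in three variables in characteristic zero has the WLP --- is not the Harima--Migliore--Nagel--Watanabe theorem: their result concerns Artinian \emph{complete intersections} in three variables, and the arguments of Stanley and Watanabe you allude to concern monomial complete intersections. Whether all codimension-three Artinian Gorenstein algebras enjoy the WLP is a well-known open problem, so as written your key step rests on an unavailable result. The repair is exactly where the genericity hypothesis on $G$ must enter: one only needs the WLP for the \emph{general} Gorenstein algebra with the given numerical data, which follows from Harima's theorem (the reference \cite{H} used in the paper) together with semicontinuity of the rank of multiplication by $L$. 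With that substitution, your argument coincides with the paper's proof.
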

\begin{proof} Since $G$ is general, by \cite{H} $R'/I$ is artinian and 
arithmetically Gorenstein and enjoys the weak Lefschetz property.
The conclusion follows since the socle degree of $R'/I$
is at most $tr(A)-2$.
\end{proof}

\par\noindent
{\it Proof of Theorem \ref{tre}.} 
We may assume that $A$ is ordered and we will make 
induction on the trace of $A$. As explained in Remark \ref{order}, 
the entries of the diagonal of  $A$ are either all even or all odd.
If the entries  are even, we use as basis for the induction 
the null matrix, for which the statement is trivial. If the 
entries are odd,  we use the matrix with all the entries equal to $1$, 
for which the statement holds by \cite{Beau} Proposition 5.1.

For the inductive step, let $B$ be the matrix obtained from $A$ 
by subtracting $1$ to  the first row and the first column
(hence subtracting $2$ from $a_{11}$). We have
$tr(B)=tr(A)-2$ and by induction the theorem holds for $B$.
Thus if $H$ is a general matrix of forms with degree matrix
$B$, then by Remark \ref{reduc} the submaximal pfaffians
of $H$ generate an ideal $I(H)$ which coincides with $R'$ in degree
$\geq tr(A)/2-1$. 

Consider the symmetric matrix $G'$ obtained from $H$ by erasing 
the first row and the  first column. Then $G'$ is a general 
skew-symmetric $(2k-1)\times (2k-1)$ matrix of forms,
whose degree matrix $A'$ corresponds to $A$ minus the
first row and the first column. Call $I(G')$ the ideal 
generated by the  submaximal pfaffians of the  $G'$. 
As observed in Lemma  \ref{WL2},  the multiplication map by 
a general linear form $L$  determines a surjection  
$(R'/I(G'))_{d-1}\to (R'/I(G'))_d$ for all $d\geq tr(A')/2 -1$.
In particular, we get that $LI(H)+I(G')$ coincides with $R'$
in all degrees $tr(B)/2+1$.

Let $G$ be the matrix obtained from $H$ by multiplying
the first row and column by $L$. We have $\partial G=A$ and
moreover the ideal $I(G)$ generated by the submaximal 
pfaffians of $G$ contains $LI(H)+I(G')$. The claim follows
from Remark \ref{reduc}.\hfil\qed
\bigskip

We will need in the next section a technical results on 
submaximal pfaffians of skew-symmetric matrices of odd size
(see Definition \ref{sub}).

\begin{prop}\label{step3}
{Let} $A=(a_{ij})$ be a non-negative ordered
symmetric homogeneous integer matrix of odd size 
$(2k-1)\times (2k-1)$, $k>1$.
For a general choice of $k$ matrices of ternary forms
$G_1,\dots, G_k$ with $\partial G_i=A$, the
submaximal pfaffians of all the $G_i$'s generate an ideal
$I$ which coincides with the ring $R'$ in degree $d\geq(a_{11}+tr(A))/2$.
\end{prop}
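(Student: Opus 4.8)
The strategy is to reduce Proposition~\ref{step3} to Theorem~\ref{tre} and Lemma~\ref{WL2} by peeling off the first row and column of $A$ exactly as in the proof of Theorem~\ref{tre}, but now using all $k$ matrices at once and keeping careful track of the degree in which the generated ideal becomes the whole ring. Write $a=a_{11}$ and $d_0=(a_{11}+tr(A))/2=(a+tr(A))/2$; note that since $A$ has odd diagonal entries all of the same parity and $tr(A)$ is a sum of $2k-1$ such, $a+tr(A)$ is even, so $d_0$ is an integer. First I would dispose of the base case $k=2$, where $A$ is $3\times 3$: the single submaximal pfaffian of a $3\times 3$ skew matrix $G_i$ is just $\pm$ one of its entries $(G_i)_{lm}$, so the ideal generated by the submaximal pfaffians of $G_1,\dots,G_k$ is generated by $k$ general forms, one of degree $a_{12}$, one of degree $a_{13}$, one of degree $a_{23}$, for each of the $k=2$ matrices; a dimension count (or a direct Koszul-type argument) shows this ideal fills $R'$ in the asserted degree $d\geq(a_{11}+tr(A))/2$, which for $3\times3$ is $d\ge a_{11}+a_{23}$ after using homogeneity.

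For the inductive step I would argue by induction on the trace of $A$, with $k$ fixed. Let $B$ be obtained from $A$ by subtracting $1$ from the first row and the first column (so $a_{11}$ drops by $2$, and the off-diagonal entries $a_{1j}$ drop by $1$); then $tr(B)=tr(A)-2$ and $B$ is again non-negative, ordered, symmetric, homogeneous of odd size, provided $a_{11}$ was large enough — the boundary cases where a subtraction would make an entry negative have to be handled separately, much as the all-ones matrix served as a base case in Theorem~\ref{tre}. By induction the submaximal pfaffians of $k$ general matrices $H_1,\dots,H_k$ with $\partial H_i=B$ generate an ideal $I(H)$ that is all of $R'$ in degree $\ge(b_{11}+tr(B))/2=(a_{11}-2+tr(A)-2)/2=d_0-2$. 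Now for each $i$ multiply the first row and column of $H_i$ by a general linear form $L$ to get $G_i$ with $\partial G_i=A$; the submaximal pfaffians of $G_i$ are: the one obtained by erasing the first index, which is unchanged from $H_i$ and lies in $I(H_i')$ (the ideal of submaximal pfaffians of the $(2k-2)\times(2k-2)$ matrix $H_i'$ got by deleting row/column~$1$), and the ones obtained by erasing another index $j\ne 1$, each of which acquires a factor of $L$. Summing over $i$, $I(G)\supseteq L\cdot I(H)+\sum_i I(H_i')$.

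The remaining point is to show $L\cdot I(H)+\sum_i I(H_i')$ equals $R'$ in degree $d_0$. Since $I(H)$ is all of $R'$ in degree $d_0-2$, the submodule $L\cdot I(H)$ is all of $L\cdot R'$ in degree $d_0-1$, hence covers everything in degree $d_0$ modulo $(R'/LR')$; so it suffices to check that $\sum_i I(H_i')$ surjects onto $(R'/L)_{d_0}$ — equivalently, reading things in the polynomial ring in two variables $R'/L\cong\CC[x,y]$, that the $k$ copies of the submaximal-pfaffian ideal of a general $(2k-2)\times(2k-2)$ skew matrix with the appropriate (reduced) degree matrix $A'$ generate everything in degree $d_0$. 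Here I would invoke Lemma~\ref{WL2}: the restriction $H_i'|_{L=0}$ is a general skew matrix of binary forms, its submaximal-pfaffian ring $R'/I(H_i')|_{L=0}$ is artinian Gorenstein with socle degree $tr(A')-2$, and weak Lefschetz plus the genericity of the $k$ independent matrices forces the sum of the $k$ ideals to be everything already a couple of degrees below the socle; one checks $tr(A')=tr(A)-2a_{12}$ and that $d_0$ exceeds the relevant threshold, so the count goes through. \textbf{The main obstacle} I anticipate is precisely this bookkeeping: verifying that the numerical threshold $d\ge(a_{11}+tr(A))/2$ is exactly what drops out of the induction (the $-2$ shift in the trace must match the $-2$ shift in $a_{11}$, and the binary-form step via Lemma~\ref{WL2} must not cost more degrees than we have), and carefully isolating and settling the finitely many unbalanced base cases where subtracting from the first row and column is not allowed. \qed
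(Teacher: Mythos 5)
Your inductive step contains a reversed containment that breaks the argument. When you pass from $H_i$ to $G_i$ by multiplying the first row and column by $L$, the submaximal pfaffian of $G_i$ obtained by erasing index $1$ is the \emph{full} pfaffian of the $(2k-2)\times(2k-2)$ matrix $H_i'$; this single form does lie in $I(H_i')$ (by the pfaffian expansion along a row), but that only gives $\langle\pfaff(H_i')\rangle\subseteq I(H_i')$, not $I(H_i')\subseteq I(G_i)$. What is actually true is just $I(G_i)\supseteq L\cdot I(H_i)+\langle\pfaff(H_i')\rangle$, which is far weaker than your claimed $I(G)\supseteq L\cdot I(H)+\sum_i I(H_i')$. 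Consequently the reduction to showing that $\sum_i I(H_i')$ surjects onto $(R'/L)_{d_0}$ is unfounded; moreover the appeal to Lemma~\ref{WL2} at that point is misplaced, since $H_i'$ has \emph{even} size and Lemma~\ref{WL2} concerns submaximal pfaffians of odd-size matrices. The paper's proof never uses the deleted submatrices: it climbs from degree $d-2$ to $d$ in two stages, first multiplying only $H_2,\dots,H_k$ by $L$ and applying Lemma~\ref{WL2} to $I(H_1)$ (degree matrix $B$) so that $I(H_1)+LI'$ is everything in degree $d-1$, and then multiplying $H_1$ by $L$ and applying Lemma~\ref{WL2} to $I(G_2)$ (degree matrix $A$) so that $I(G_2)+LI''$ is everything in degree $d$.

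Second, the base of your trace induction is not settled, and it is not a finite list of exceptional cases: repeatedly subtracting $1$ from a row and a column terminates at the \emph{constant} matrices (all entries equal to some $a$), an infinite family for which the statement is genuinely nontrivial. The paper handles this by a separate device: take a single general $2k\times 2k$ skew-symmetric matrix $G$ of forms of degree $a$ and let the $G_i$ be its one-row/column deletions; the submaximal pfaffians of the $G_i$'s are submaximal pfaffians of $G$, so Theorem~\ref{tre} together with Remark~\ref{reduc} gives the claim for this special choice, and semicontinuity transfers it to a general choice of $G_1,\dots,G_k$. Your $k=2$ ``dimension count'' and the analogy with the all-ones base case of Theorem~\ref{tre} do not supply this ingredient.
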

\begin{proof} It is enough to prove the result for $d=
(a_{11}+tr(A))/2$.

Assume that all the entries of $A$ are equal to $a$. Then 
start with a general $2k\times 2k$ skew-symmetric matrix $G$, 
with all entries of degree $a$, and consider the matrices $G_i$ 
obtained from $G$ by erasing the $i$-th row and column. Then
any submaximal pfaffian of $G$ is a submaximal pfaffian of some $G_i$, 
and viceversa. Thus the ideal $I$, generated by all the submaximal
pfaffians of the matrices $G_1,\dots, G_k$, coincides  with the ideal generated 
by the submaximal pfaffians of $G$. By Remark \ref{reduc} and
 Theorem \ref{tre}, it follows that $I$ coincides with the whole ring
 in degree $(a+tr(A))/2$, which is the degree of the pfaffian of $G$.
 These matrices $G_1,\dots, G_k$ are quite special, but the claim follows
 for a general choice of the $G_i$'s, by semicontinuity.
 
 In the general case, assume that $A$ is ordered and 
 let $B=(b_{ij})$ be the matrix obtained from $A$ by decreasing 
 the first row and column by $1$. We may assume
 that the claim holds from $B$.
Notice that $(b_{11}+tr(B))/2= d-2$.
 Take $k$ general skew-symmetric matrices of forms
 $H_1,\dots, H_k$, with $\partial H_i=B$. Then the ideal $I'$
 generated by the submaximal minors of the $H_i$'s coincides with
 $R'$ in degree  $(b_{11}+tr(B))/2=(a_{11}+tr(A))/2 -2$. 
 Let $I_1$ be the ideal generated by
 the submaximal pfaffians of $H_1$. By Lemma \ref{WL2}
 the multiplication map $(R'/I_1)_{d-2}\to (R'/I_1)_{d-1}$
 surjects. Let $G_2,\dots, G_k$ be the matrices obtained from
 the $H_i$'s by multiplying the first  row and column by 
 a general linear form $L$. Then $\partial G_i=A$
 and the ideal $I''$ generated by the submaximal pfaffians
 of $H_1,G_2,\dots, G_k$ contains $LI'+I_1$, thus it
 coincides with $R'$ in degree $d-1$. Let now $I_2$ be 
 the ideal generated by the submaximal
 pfaffians of $G_2$. By Lemma \ref{WL2}
 the multiplication map $(R'/I_2)_{d-1}\to (R'/I_2)_d$
 surjects. Let $G_1$ be the matrix obtained from
$H_1$ by multiplying the first  row and column by 
 a general linear form $L$. Then $\partial G_1=A$
 and the ideal generated by the submaximal minors of 
 $G_1,\dots, G_k$ contains $I_2+LI''$, thus it
 coincides with $R$ in degree $d$. Hence the claim holds
 for $A$.
 
 The proof is concluded by observing that any 
 symmetric, homogeneous matrix $A$
 reduces to a matrix with constant entries by steps
 consisting in subtracting $1$ from one row and one column.
\end{proof}

\section{The four by four case}

We move now to the case of quaternary forms 
(and surfaces in $\PP^3$). For $4\times 4$ matrices
a complete answer to the problem of the pfaffian representation of forms
is given by the following.

\begin{thm}\label{4x4}
Let $A=(a_{ij})$ be a $4\times 4$ symmetric 
homogeneous matrix of non-negative
integers. Let $d=tr(A)/2$. Then a general form of 
degree $d$ in $\CC[x_1,\dots,x_4]$
is the sum of two pfaffians of skew-symmetric matrices, 
whose degree matrix is $A$.
\end{thm}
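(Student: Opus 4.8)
The plan is to recast the problem, via Remark~\ref{reduc}, as a statement about an ideal generated by the entries of two skew-symmetric matrices, and then to prove it by induction on $\mathrm{tr}(A)$. First I would record the elementary fact that for a $4\times 4$ skew-symmetric matrix $M=(F_{ij})$ the submaximal pfaffians of Definition~\ref{sub} are exactly its six entries $F_{ij}$, $i<j$ (erasing rows and columns $\{i,j\}$ leaves the $2\times 2$ skew block on the complementary pair). Hence, by Remark~\ref{reduc}, and since the relevant condition is Zariski open in $\VV(A)\times\VV(A)$, it suffices to exhibit, for each admissible $A$, skew-symmetric matrices $G_1,G_2$ with $\partial G_i=A$ such that the ideal $I(G_1,G_2)\subseteq R:=\CC[x_1,\dots,x_4]$ generated by the twelve entries of $G_1$ and $G_2$ coincides with $R_d$ in degree $d=\mathrm{tr}(A)/2$. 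By Remark~\ref{order} I may assume $A$ ordered, and I would induct on $\mathrm{tr}(A)$.

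The base of the induction consists of the cases $a_{11}\le 1$, which admit no reduction. If $a_{11}=0$ then $A=0$ and $d=0$, and a nonzero constant is trivially a pfaffian of a constant skew-symmetric matrix. If $a_{11}=1$ then, since the diagonal entries of $A$ are all odd and at most $a_{11}=1$, $A$ is forced to be the all-ones matrix and $d=2$; a general quadric in four variables has rank $4$, hence is $x_1x_2+x_3x_4$ in suitable coordinates, which is visibly a pfaffian of a $4\times 4$ skew-symmetric matrix of linear forms. In both cases $s(A)=1$.

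For the inductive step, assume $a_{11}\ge 2$ and let $B$ be obtained from $A$ by subtracting $1$ from the first row and first column; then $B$ is symmetric, homogeneous and non-negative with $\mathrm{tr}(B)=\mathrm{tr}(A)-2$, so by the inductive hypothesis and Remark~\ref{reduc} I may fix general skew matrices $H_1,H_2$ with $\partial H_i=B$ whose entries generate $R$ in degree $\mathrm{tr}(B)/2=d-1$, hence in all degrees $\ge d-1$ (generation in one degree forces it in all higher degrees since $R_{m+1}=R_1R_m$). Because $B$ agrees with $A$ away from the first row and column, the $\{2,3,4\}$-blocks $H_i''$ are general $3\times 3$ skew matrices with degree matrix $A'':=A^{\{2,3,4\}}$, which is again ordered. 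Now let $G_i$ be $H_i$ with its first row and first column multiplied by a general linear form $L$; then $\partial G_i=A$. Writing $P$ (resp.\ $Q$) for the ideal generated by the first-row entries (resp.\ the $\{2,3,4\}$-block entries) of $H_1$ and $H_2$, one has $I(H_1,H_2)=P+Q$ and $I(G_1,G_2)=LP+Q$. From $(P+Q)_{d-1}=R_{d-1}$ and $LQ_{d-1}\subseteq Q_d$ one deduces $LR_{d-1}\subseteq(LP+Q)_d$, so it remains only to check that $Q_d$ surjects onto $R_d/LR_{d-1}=(R/LR)_d$. But $R/LR$ is a polynomial ring in three variables, and the image of $Q$ there is generated by the entries — that is, the submaximal pfaffians — of two general $3\times 3$ skew matrices $\overline{H_1''},\overline{H_2''}$ with degree matrix $A''$; by Proposition~\ref{step3} applied with $k=2$ this ideal fills the ring already in every degree $\ge(a''_{11}+\mathrm{tr}(A''))/2=d-\tfrac12(a_{11}-a_{22})\le d$. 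Hence $(LP+Q)_d=R_d$, and the induction closes.

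The delicate point, which I expect to be the heart of the argument, is this passage through the hyperplane $\{L=0\}$: once the first row and column are multiplied by $L$, the pair $G_1,G_2$ sees $P$ only through its $L$-multiples, so one must recognise that the exact deficiency is $R_d/LR_{d-1}\cong(R/LR)_d$ and then fill it with $Q$; and Proposition~\ref{step3} must be invoked here with \emph{two} ternary $3\times 3$ matrices, since a single one would only give generation in degrees $\ge\mathrm{tr}(A'')-2$ (a complete-intersection bound), which exceeds $d$ exactly when $A$ is close to balanced — this is the structural reason why the answer for $4\times 4$ matrices in four variables is $s(A)\le 2$ rather than $s(A)\le 1$. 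A more routine but necessary point is the genericity bookkeeping: the matrices $H_1,H_2$ and the form $L$ must be chosen so that the inductive hypothesis, the hyperplane restriction, and Proposition~\ref{step3} all apply at once, which is possible because each requirement is a dense open condition.
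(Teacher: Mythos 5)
Your argument is correct, but it is not the route the paper takes for this particular theorem. The paper also reduces, via Remark \ref{reduc}, to showing that the twelve entries of two general skew-symmetric matrices with degree matrix $A$ span $R$ in degree $d$, but it then disposes of this in one line by citing Theorem 2.9 of \cite{CG1}: already eight general quaternary forms of degrees $a,b,c,e,a,b,c,e$ with $a+e=b+c$ generate $\CC[x_1,\dots,x_4]$ in degree $a+e$ (take $a=a_{12}$, $b=a_{13}$, $c=a_{24}$, $e=a_{34}$ and use the homogeneity relations $a_{12}+a_{34}=a_{13}+a_{24}=a_{14}+a_{23}$). What you have written instead is, in substance, the $k=2$ instance of the paper's later proof of the general Theorem \ref{submain}: induction on the trace, base cases $A=0$ and the all-ones matrix, the reduction $A\mapsto B$ by stripping $1$ from the first row and column, multiplication of that row and column by a general linear form $L$, identification of the deficiency with $(R/LR)_d$, and the appeal to Proposition \ref{step3} with two ternary $3\times 3$ matrices, which applies precisely because $A$ ordered gives $(a_{22}+tr(A''))/2\le d$. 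Your supporting checks are the right ones (e.g. $2a_{1j}=a_{11}+a_{jj}\ge 2$ guarantees $B$ stays non-negative, and the parity of the diagonal forces the base cases to be exactly $A=0$ and the all-ones matrix). The trade-off: the paper's proof is two lines and makes visible the link with complete intersection subschemes exploited in the Corollary, at the price of an external citation; yours is self-contained within the paper's own toolkit and shows that the $4\times 4$ case needs nothing beyond the ternary Proposition \ref{step3} --- but it is subsumed by Theorem \ref{submain}, of which it is the special case $k=2$.
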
 
\begin{proof} Let $M=(M_{ij})$ be a general $4\times 4$ 
skew-symmetric matrix of forms, 
with $\partial M=A$. If $F$ is the pfaffian of 
$M$, then Proposition \ref{tg} tells us
that the tangent space at $F$ of the variety $V$ 
(of forms which are pfaffians of matrices 
with degree matrix $A$) is generated by the  
$2\times 2$ submaximal pfaffians of $M$.
These pfaffians correspond to the six entries $M_{12},M_{13},M_{14},M_{23},M_{24},M_{34}$,
thus they are six general forms, of degrees (respectively) $a_{12},a_{13},a_{14},a_{23},a_{24},a_{34}$,
The homogeneity of $A$ implies that 
$$a_{12}+a_{34} = a_{13}+a_{24} = a_{14}+a_{23} 
$$
Thus, after Remark \ref{reduc}, the claim follows if we prove that 
$12$ general forms, of degrees respectively
$$a_{12},a_{13},a_{14},a_{23},a_{24},a_{34},\ a_{12},a_{13},a_{14},a_{23},a_{24},a_{34}$$
generate the polynomial ring $\CC[x_1,\dots,x_4]$ in degree 
$a_{12}+a_{34}=tr(A)/2$.

On the other hand, it is a consequence of Theorem 2.9
of \cite{CG1} that already $8$ general forms of degrees respectively 
$a,b,c,e,a,b,c,e$, with $a+e=b+c$, generate 
$\CC[x_1,\dots,x_4]$ in degree $a+e$.
The claim thus follows by taking $a=a_{12}, 
b= a_{13}, c=a_{24},e=a_{34}$.
\end{proof}
 
After Beauville's work (see e.g. Theorem 2.1 of \cite{CF2}),
a form $F$ is the pfaffian of a matrix with degree matrix $A$
if and only if  the surface $F$ contains a complete intersection 
set of points, of type $a_{12},a_{13},a_{14}$.
 
Thus we just proved that:
 
\begin{cor} For any choice of numbers $d,a,b,c$ with $d>a,b,c$, 
a general form of degree $d$ is the sum of two forms $F_1$, $F_2$ 
corresponding to two surfaces, both containing a complete 
intersection set of points of type $a,b,c$.  
\end{cor}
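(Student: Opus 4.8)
The plan is to deduce this corollary directly from Theorem \ref{4x4}, combined with Beauville's characterization of pfaffian surfaces (Theorem 2.1 of \cite{CF2}) recalled in the paragraph just above the statement.

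First I would manufacture an appropriate degree matrix from the data $d,a,b,c$. Given $d>a,b,c$, let $A=(a_{ij})$ be the $4\times 4$ symmetric matrix whose off-diagonal entries are given by
$$a_{12}=a,\ a_{13}=b,\ a_{14}=c,\ a_{23}=d-c,\ a_{24}=d-b,\ a_{34}=d-a$$
(with diagonal determined by homogeneity, equivalently $a_{ii}=2a_{ij}-a_{jj}$). One checks at once that $a_{12}+a_{34}=a_{13}+a_{24}=a_{14}+a_{23}=d$, which for a $4\times 4$ matrix is precisely the homogeneity condition, that the entries are non-negative since $0\le a,b,c<d$, and that $\mathrm{tr}(A)=2d$, i.e. $d=\mathrm{tr}(A)/2$.

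Next I would apply Theorem \ref{4x4} to this $A$: the general form $F$ of degree $d$ in $\CC[x_1,\dots,x_4]$ can be written $F=\pfaff(M_1)+\pfaff(M_2)$, where $M_1,M_2$ are skew-symmetric matrices of forms with $\partial M_1=\partial M_2=A$. Set $F_i=\pfaff(M_i)$, so $F=F_1+F_2$. By the quoted result of Beauville, a form is the pfaffian of a $4\times 4$ skew-symmetric matrix with degree matrix $A$ exactly when the surface it defines contains a complete intersection of three surfaces of degrees $a_{12},a_{13},a_{14}$; applying this to $F_1$ and to $F_2$, and recalling that $(a_{12},a_{13},a_{14})=(a,b,c)$, yields two surfaces $F_1,F_2$ with $F=F_1+F_2$, each containing a complete intersection set of points of type $a,b,c$, which is the assertion.

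I do not expect a real obstacle: the statement is a repackaging of Theorem \ref{4x4}. The only points needing a little care are (i) arranging that the prescribed triple $(a,b,c)$ occupies the three entries $a_{12},a_{13},a_{14}$ singled out by Beauville's theorem — this forces the complementary entries to be $d-a,d-b,d-c$, which is exactly why the homogeneity relation $a+(d-a)=b+(d-b)=c+(d-c)$ holds automatically — and (ii) observing that the (formal) diagonal of $A$ enters only through $\mathrm{tr}(A)$, so that the non-negativity hypothesis of Theorem \ref{4x4} is the one imposed on the off-diagonal entries, which are the actual degrees of the forms $F_{ij}$. With these understood, the corollary follows immediately.
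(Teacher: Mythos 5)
Your proposal is correct and follows essentially the same route as the paper, which states the corollary as an immediate consequence of Theorem \ref{4x4} together with Beauville's characterization (Theorem 2.1 of \cite{CF2}); you simply make explicit the choice of degree matrix with $(a_{12},a_{13},a_{14})=(a,b,c)$ and complementary entries $d-a,d-b,d-c$. Your remark (ii) about the formal diagonal (which can be negative, e.g.\ when $a+b+c<d$) entering only through the trace is a point the paper glosses over, and your handling of it is consistent with how Theorem \ref{4x4} is actually proved, since only the six off-diagonal degrees and the relation $a_{12}+a_{34}=a_{13}+a_{24}=a_{14}+a_{23}=d$ are used there.
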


Compare this statement with the results of \cite{CCG2}.
 
\begin{rem} We derived our statement from Theorem 2.9
 of \cite{CG1}, which geometrically proves that for any 
 $d,a,b$ with $a,b<d$ 
a general form of degree $d$ is the sum of two forms 
$F_1$, $F_2$ corresponding to two
surfaces, both containing a complete intersection curve of type $a,b$.

From this point of view, a geometric reading of the proof of 
Theorem \ref{4x4} seems straightforward.
 \end{rem}

\section{General quaternary forms as sum of pfaffians}
 
In this section, we want to extend the results for quaternary forms
and general degree matrices.

Let we denote by $R=\CC[x,y,z,t]$ the polynomial ring
in four variables and with $R'$ the quotient of $R$ by a general linear form 
(i.e. $R'$ is isomorphic to a polynomial ring in three variables).

We will need two results, derived directly from the previous sections.

\begin{lem}\label{WL4}
 Let $G$ be a general skew-symmetric 
$2k\times 2k$ matrix of linear quaternary 
forms. Call  $I$ the ideal generated by 
the submaximal $(2k-2)\times(2k-2)$ pfaffians of $G$.

Then the multiplication by a general linear form $L$ determines
a surjection $(R/I)_{d-1}\to (R/I)_d$
for all $d\geq k$. 
\end{lem}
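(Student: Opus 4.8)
The plan is to reduce the statement to the three-variable situation, where the analogous surjectivity has already been packaged in Lemma \ref{WL2} and Proposition \ref{step3}. First I would fix a general linear form $L\in R$ and let $R'=R/\langle L\rangle$, so that $R'$ is a polynomial ring in three variables. Restricting a general skew-symmetric $2k\times 2k$ matrix $G$ of linear quaternary forms to $R'$ produces a general skew-symmetric $2k\times 2k$ matrix $\bar G$ of linear ternary forms, and the submaximal $(2k-2)\times(2k-2)$ pfaffians of $\bar G$ are exactly the reductions mod $L$ of the submaximal pfaffians of $G$; hence $(R'/\bar I)\cong(R/I)/L(R/I)$, where $\bar I$ is the ideal these forms generate in $R'$. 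The degree of the pfaffian of $\bar G$ is $k$, so by Theorem \ref{tre} (together with Remark \ref{reduc}) the ideal $\bar I$ equals $R'$ in every degree $\geq k$; equivalently $(R'/\bar I)_d=0$ for $d\geq k$, which says precisely $(R/I)_d = L\cdot(R/I)_{d-1}$ for $d\geq k$, i.e. the multiplication map $(R/I)_{d-1}\to(R/I)_d$ is surjective in that range.

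The one point that needs care is the claim that reduction modulo a general $L$ does not collapse the module in the wrong way, i.e. that the matrix $\bar G$ obtained is still \emph{general} as a skew-symmetric matrix of ternary linear forms, and that forming submaximal pfaffians commutes with this specialization. Genericity is a standard semicontinuity/open-dense argument: the locus of quaternary matrices $G$ whose reduction mod some fixed general $L$ fails to be general is a proper closed subset, so for general $G$ the reduced $\bar G$ lands in the open set where Theorem \ref{tre} applies. Commutation of pfaffians with ring quotients is immediate since pfaffians are polynomial in the entries. The (mild) obstacle is therefore purely bookkeeping: matching the degree $d=k$ threshold in Lemma \ref{WL2}/Theorem \ref{tre} (the socle degree of the artinian Gorenstein quotient in three variables is $tr(A)-2$ with $A$ the all-ones $2k\times 2k$ matrix, giving $2k-2$, and the pfaffian degree is $k$) with the stated range $d\geq k$ here, and checking that the edge case $d=k$ is included.

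Finally I would assemble these observations: surjectivity of $\cdot L$ on $(R/I)_{d-1}\to(R/I)_d$ for $d\geq k$ is equivalent, by graded Nakayama, to $(R/I)_d$ being generated over $R$ by elements of degree $<k$ together with the relation that $L$ is not a zero-divisor direction — but more directly it is just the vanishing $(R'/\bar I)_d=0$ for $d\geq k$ established above. I expect no serious difficulty; the content is entirely in invoking Theorem \ref{tre} for the all-ones matrix of ternary linear forms and transporting it up one variable via a generic hyperplane section.
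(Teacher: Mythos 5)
Your argument is correct and is essentially the proof in the paper: the paper writes the cokernel of multiplication by $L$ as $(R/(I,L))_d$ via the exact sequence $(R/I)_{d-1}\stackrel{L}{\longrightarrow}(R/I)_d\to(R/(I,L))_d\to 0$, identifies it with the degree-$d$ part of $R'/\bar I$ for a general ternary matrix $\bar G$, and kills it using Theorem \ref{tre} together with Remark \ref{reduc}, exactly as you do. The only cosmetic difference is that you spend a paragraph justifying the genericity of $\bar G$ and the commutation of pfaffians with reduction mod $L$, which the paper leaves implicit.
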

\begin{proof} 
 Let $I$ be the ideal generated by the submaximal pfaffians 
 of $G$ and let $L$ be a general linear form. 
 Consider  the exact sequence
$$ (R/I)_{d-1}\stackrel L{\longrightarrow} (R/I)_d\to 
(R/(I,L))_d\to 0.$$ 
By Theorem \ref{tre} and Remark \ref{reduc}, the module on 
the right side is $0$, when $d= k$, and then also
 for all $d\geq k$.
The claim follows.
\end{proof}

Just with the same procedure, but using Proposition \ref{step3}
instead of Theorem \ref{tre}, we get the following.

\begin{lem}\label{step4}
Let Let $A=(a_{ij})$ be a non-negative ordered
symmetric homogeneous integer matrix of odd size 
$(2k-1)\times (2k-1)$, $k>1$.
For a general choice of $k$ matrices of quaternary forms
$G_1,\dots, G_k$ with $\partial G_i=A$, call $I$ the ideal 
generated by the submaximal pfaffians of all the $G_i$'s.
Then the multiplication by a general linear form $L$ determines
a surjection $(R/I)_{d-1}\to (R/I)_d$ for all $d\geq(a_{11}+tr(A))/2$.
\end{lem}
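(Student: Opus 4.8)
The plan is to mimic verbatim the argument of Lemma~\ref{WL4}, replacing the role of Theorem~\ref{tre} by that of Proposition~\ref{step3}. Let $I$ be the ideal of $R=\CC[x,y,z,t]$ generated by the submaximal pfaffians of the $k$ general quaternary matrices $G_1,\dots,G_k$ with $\partial G_i=A$, and let $L$ be a general linear form. Consider the exact sequence
$$ (R/I)_{d-1}\stackrel{L}{\longrightarrow} (R/I)_d \to (R/(I,L))_d \to 0. $$
It suffices to show that the cokernel $(R/(I,L))_d$ vanishes for $d=(a_{11}+tr(A))/2$, since then it vanishes for all larger $d$ as well (the ideal only grows). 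Now $R/(L)\cong R'$ is a polynomial ring in three variables, and modulo $L$ the matrices $G_1,\dots,G_k$ specialize to $k$ general skew-symmetric matrices of \emph{ternary} forms with the same degree matrix $A$; their submaximal pfaffians are the reductions mod $L$ of the submaximal pfaffians of the $G_i$, so they generate the image of $I$ in $R'$, i.e.\ $(I,L)/(L)$. By Proposition~\ref{step3}, that ideal coincides with $R'$ in degree $(a_{11}+tr(A))/2$, hence $(R/(I,L))_d=0$ there, and the multiplication map is surjective in every degree $d\geq (a_{11}+tr(A))/2$.

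The one point requiring a word of care is that specialization is compatible with generality: a general choice of the $G_i$ over $R$ restricts, modulo a general $L$, to a general choice of ternary matrices with degree matrix $A$, so Proposition~\ref{step3} does apply to the reduced matrices. This is a standard semicontinuity/openness observation—the locus of $k$-tuples for which the relevant ideal fills $R'$ in the prescribed degree is open and, by Proposition~\ref{step3}, nonempty—so it causes no real difficulty; I expect it to be the only non-formal step, and it is genuinely routine. The rest is the same one-line cohomological reduction already used in Lemma~\ref{WL4}.

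Thus the proof is the following short paragraph: \emph{Let $I$ be the ideal generated by the submaximal pfaffians of the general matrices $G_1,\dots,G_k$ and let $L$ be a general linear form. From the exact sequence $(R/I)_{d-1}\xrightarrow{\,L\,}(R/I)_d\to (R/(I,L))_d\to 0$ it is enough to see that $(R/(I,L))_d=0$ for $d=(a_{11}+tr(A))/2$. Modulo $L$ the ring $R$ becomes $R'$, a polynomial ring in three variables, and the $G_i$ become general ternary skew-symmetric matrices with degree matrix $A$; their submaximal pfaffians generate $(I,L)/(L)$, which by Proposition~\ref{step3} equals $R'$ in degree $(a_{11}+tr(A))/2$. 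Hence $(R/(I,L))_d=0$ for that $d$, and therefore for all $d\geq (a_{11}+tr(A))/2$, which gives the claimed surjectivity.}
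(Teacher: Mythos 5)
Your proposal is correct and is exactly the argument the paper intends: the paper proves Lemma~\ref{step4} by declaring it follows ``with the same procedure'' as Lemma~\ref{WL4}, i.e.\ via the exact sequence $(R/I)_{d-1}\xrightarrow{L}(R/I)_d\to(R/(I,L))_d\to 0$ and the vanishing of the cokernel supplied by Proposition~\ref{step3} applied to the reductions of the $G_i$ modulo $L$. Your added remark on the compatibility of generality with specialization mod $L$ is a reasonable (and correct) elaboration of a point the paper leaves implicit.
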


We consider first  the case of matrices of linear forms.

\begin{thm}\label{linmain} 
For a general choice of $k$ skew-symmetric matrices of linear forms
$H_1,\dots, H_k$ of size $2k\times 2k$, the submaximal
pfaffians of the $H_i$'s generate an ideal which coincides with
the polynomial ring $R$ in all degrees $d\geq k$. 
\end{thm}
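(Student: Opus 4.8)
The plan is to induct on $k$, using the same ``multiply one row and column by a general linear form'' device that powered Theorem~\ref{tre} and Proposition~\ref{step3}, but now lifting the statement from three variables to four via the Weak Lefschetz surjections of Lemma~\ref{WL4} and Lemma~\ref{step4}. For the base case $k=1$ the matrix is $2\times 2$ with a single linear entry, its submaximal pfaffian is that entry, and $k$ such general linear forms (here $k=1$) clearly generate $R$ in all degrees $\geq 1$; more usefully, the first genuine case $k=2$ is covered by Theorem~\ref{4x4} applied to the all-ones $4\times 4$ matrix, which gives that the submaximal pfaffians of two general $4\times 4$ skew matrices of linear forms generate $R$ in degree $2$ and hence in all degrees $\geq 2$.

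For the inductive step, assume the statement for $k-1$, i.e.\ $k-1$ general $(2k-2)\times(2k-2)$ skew-symmetric matrices of linear forms have submaximal pfaffians generating $R$ in degrees $\geq k-1$. Given the target size $2k$, I would build each $H_i$ ($i=1,\dots,k$) in two stages. First take a general $(2k-2)\times(2k-2)$ skew matrix $P_i$ of linear forms; by a degree shift its submaximal pfaffians (which have degree $k-2$) times a general linear form land in degree $k-1$. Then enlarge: bordering $P_i$ to a $(2k-1)\times(2k-1)$ skew matrix with a general linear row/column, and then to a $2k\times 2k$ skew matrix with a further general linear row/column multiplied as needed, one produces a $2k\times 2k$ matrix $H_i$ whose submaximal $(2k-2)\times(2k-2)$ pfaffians include, among others, (a) the submaximal pfaffians of the bordered $(2k-1)\times(2k-1)$ matrix times a linear form, and (b) the submaximal pfaffians of the inner $(2k-2)\times(2k-2)$ blocks. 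The Laplace-type expansion of a pfaffian along a row shows precisely which lower pfaffians appear with linear coefficients when one row and column are replaced by $L \cdot(\text{row})$, and this is the combinatorial identity I would record carefully.

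The mechanism is then the standard one: let $I'$ be the ideal generated by the degree-$(k-1)$ forms coming from the inner $(2k-2)$-blocks of $H_1,\dots,H_k$; by the inductive hypothesis $I'$ equals $R$ in degree $k-1$. Using Lemma~\ref{WL4} (applied to one of the $(2k-2)\times(2k-2)$ inner blocks, or to Lemma~\ref{step4} at an intermediate odd size), multiplication by a general linear form $L$ surjects $(R/I_j)_{d-1}\to(R/I_j)_d$ for the relevant ranges $d\geq k-1$, so that $L\cdot I' + I_j$ fills up degree $k$. Stringing together the $k$ matrices, peeling off one bordered row/column at a time exactly as in the proof of Proposition~\ref{step3} (first enlarging to odd size $2k-1$, invoking Lemma~\ref{step4}, then to even size $2k$, invoking Lemma~\ref{WL4}), the ideal $I$ generated by all submaximal pfaffians of $H_1,\dots,H_k$ is shown to coincide with $R$ in degree $k$, hence in all degrees $\geq k$ since $R/I$ is then $0$ in degree $k$ and multiplication by a general linear form keeps it $0$ above. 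Finally these special bordered matrices are degenerations of general $2k\times 2k$ skew matrices of linear forms, so the conclusion passes to the general $H_i$ by semicontinuity.

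\textbf{Main obstacle.} The delicate point is bookkeeping the pfaffian expansion: when one replaces a row and column of a $2k\times 2k$ skew matrix by $L$ times a general row and column, I must verify that the $(2k-2)\times(2k-2)$ submaximal pfaffians of the enlarged matrix genuinely \emph{contain} $L$ times the submaximal pfaffians of the smaller matrix \emph{plus} enough of the inner $(2k-2)$-block pfaffians to make both the inductive hypothesis and the Weak Lefschetz surjection applicable in the right degrees. Matching the degree ranges ($d\geq k-1$ for the inner data, $d\geq k$ for the output, and the intermediate odd-size range $d\geq(a_{11}+\mathrm{tr})/2$ coming from Lemma~\ref{step4} with the all-ones matrix of the appropriate odd size) is where the argument could stall, and it is essentially the four-variable analogue of the two-step peeling already carried out in Proposition~\ref{step3}.
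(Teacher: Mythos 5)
Your overall strategy (border smaller matrices, multiply a row and column by a general linear form $L$, and combine the inductive hypothesis with a Weak Lefschetz surjection) is the right one, but the construction as you describe it has a genuine gap, located exactly where you flag your ``main obstacle''. The two containments you want to record are degree-incompatible: the submaximal pfaffians of a $2k\times 2k$ matrix of linear forms have degree $k-1$, so they can contain neither the submaximal pfaffians of an inner $(2k-2)\times(2k-2)$ block (degree $k-2$, your item (b)) nor $L$ times the submaximal pfaffians of an intermediate $(2k-1)\times(2k-1)$ matrix (degree $k$, your item (a)). Consequently the ideal $I_j$ generated by the submaximal pfaffians of an inner block is \emph{not} contained in the final ideal $I$ --- only $L\cdot I_j$ and the full pfaffian $\pfaff(P_j)$ enter $I$ --- so the surjection $(R/I_j)_{k-1}\to (R/I_j)_k$ does not let you conclude $I_k=R_k$: it yields $R_k=(I_j)_k+LR_{k-1}$, and the summand $(I_j)_k$ is not accounted for inside $I$.

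The paper's proof avoids this by \emph{not} bordering all $k$ matrices. It reserves one genuinely general full-size $2k\times 2k$ matrix $M$: its submaximal pfaffians (degree $k-1$) are honest generators of the final ideal, and Lemma \ref{WL4} applied to $M$ at full size gives the surjection $(R/I)_{k-1}\to(R/I)_k$ for $d\geq k$, i.e.\ $R_k=I_k+LR_{k-1}$. The remaining $k-1$ matrices are obtained from $k-1$ general $(2k-2)\times(2k-2)$ matrices $G_1,\dots,G_{k-1}$ --- which by the inductive hypothesis fill degree $k-1$ --- by bordering with the sparse block $\left(\begin{smallmatrix}0&L\\-L&0\end{smallmatrix}\right)$ and zeros elsewhere, so that their nonzero submaximal pfaffians are exactly $L$ times those of the $G_i$; this places $LR_{k-1}$ inside the final ideal in degree $k$, and the two contributions together give all of $R_k$. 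No intermediate odd-size step or appeal to Lemma \ref{step4} is needed in this theorem. If you restructure your induction so that one of the $k$ matrices stays general of full size and supplies the Lefschetz surjection, the remainder of your argument (including the closing semicontinuity step) goes through.
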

\begin{proof}
Use induction on $k$. The case $k=2$ holds trivially
since the submaximal pfaffians correspond to the choice
of six general linear forms.

In the general case, by induction, all forms  of degree $k-1$ in four
variables sit in the ideal $I'$ generated by the submaximal pfaffians
of $k-1$ general skew-symmetric matrices of linear forms 
$G_1,\dots, G_{k-1}$,  of size $(2k-2)\times (2k-2)$.
Choose one general $2k\times 2k$ skew-symmetric matrix of linear 
forms $M$. By Lemma \ref{WL4}, if $I$ is the ideal 
 generated by the submaximal pfaffians
of $M$, then the multiplication by a general
linear form $L$ determines a surjection $(R/I)_{d-1}\to (R/I)_d$
for all $d\geq k$.

Let $H_1,\dots, H_{k-1}$ be the matrices obtained by the $G_i$'s
by adding, as first two rows and two columns, the vectors 
$(0\ L\ 0\ \dots \ 0)$ and $(-L\ 0\ 0\ \dots \ 0)$.
 Then the non-zero submaximal pfaffians 
of the $H_i$'s are the submaximal pfaffians of the
$G_i$'s multiplied by $L$. Thus the submaximal pfaffians
of the matrices $H_1,\dots, H_{k-1}, M$ generate an ideal  
which contains $I+LI'$, hence it coincides
with $R$ in degree $d\geq k$. The claim follows.
\end{proof}

\begin{thm}\label{submain} 
Fix a $2k\times 2k$
symmetric homogeneous matrix $A$ of non-negative integers.
Then for a general choice of $k$ matrices of  forms
$G_1,\dots, G_k$ with $\partial G_i=A$ for all $i$, the submaximal
pfaffians of the $G_i$'s generate an ideal which coincides with
the polynomial ring $R$ in degree $d\geq tr(A)/2$. 
\end{thm}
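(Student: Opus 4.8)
The plan is to mimic the inductive strategy used for Theorem \ref{linmain}, but now carrying the degree matrix $A$ along and peeling off one row and column at a time. First I would reduce to an \emph{ordered} matrix $A$ (Remark \ref{order}) and set $d = tr(A)/2$. The induction will be on $k$; the base case $k=2$ is exactly Theorem \ref{4x4} (which gives $s(A)\le 2$ for $4\times 4$ matrices), so the ideal generated by the submaximal pfaffians of two general $4\times 4$ matrices with degree matrix $A$ fills up $R$ in degree $d$.

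For the inductive step I would do the following. Let $A'=(a'_{ij})$ be the $(2k-1)\times(2k-1)$ matrix obtained from $A$ by deleting the first row and column, and let $A''$ be the $(2k-2)\times(2k-2)$ matrix obtained by deleting the first two rows and columns. Both are ordered, symmetric, homogeneous, non-negative. By induction (applied to $A''$), a general choice of $k-1$ matrices $G_1'',\dots,G_{k-1}''$ with $\partial G_i''=A''$ has submaximal pfaffians generating an ideal $I''$ that coincides with $R$ in degree $tr(A'')/2$, hence — enlarging by a general linear form and using a Lefschetz-type surjection — in all higher degrees as well. Now build $H_1,\dots,H_{k-1}$ from the $G_i''$ by inserting as first two rows/columns the vectors $(0,L,0,\dots,0)$ and $(-L,0,\dots,0)$ for a general linear form $L$: the nonzero submaximal $(2k-2)\times(2k-2)$ pfaffians of $H_i$ are exactly $L$ times the submaximal pfaffians of $G_i''$, so they generate $L\cdot I''$. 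Here one must check the degree bookkeeping: inserting a degree-one row/column twice raises $tr$ by $2$, so $tr(A) - tr(A'')$ should be $2a_{11} + \dots$ — and this is precisely where one needs to be careful, since in general the first row/column of $A$ is not all ones. The honest move is to first reduce $A$ to a matrix whose first row and column consist of ones (by repeatedly subtracting $1$ from the first row and column, as in the proof of Theorem \ref{tre} and Proposition \ref{step3}), handling each such reduction step by multiplying the first row and column of the matrices by a general linear form and invoking the surjectivity of multiplication by $L$ from Lemma \ref{WL4} or Lemma \ref{step4}.

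Concretely: after reducing so that the first row and column of $A$ are all $1$'s, I take one further general $2k\times 2k$ matrix $M$ with $\partial M = A$, let $I$ be the ideal of its submaximal pfaffians, and apply Lemma \ref{WL4}'s analogue (or Lemma \ref{step4} with the matrix $A'$ of odd size $(2k-1)$) to get that multiplication by a general linear form $L$ gives a surjection $(R/I)_{d-1}\to(R/I)_d$. Combining, the submaximal pfaffians of $H_1,\dots,H_{k-1},M$ generate an ideal containing $I + L\cdot I'$ where $I'$ is the ideal from the $(2k-1)$-sized problem in degree $d-1$; by the surjectivity this fills $R$ in degree $d$. Then I undo the reduction steps one at a time: each step replaces subtracting $1$ from the first row/column of $A$ by multiplying the first row and column of each $G_i$ by a general linear form, and the needed surjectivity again comes from Lemma \ref{WL4}/\ref{step4}. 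Finally, invoking Remark \ref{reduc}, the resulting statement that $k$ general matrices with degree matrix $A$ have submaximal pfaffians spanning $R_d$ is exactly the assertion $s(A)\le k$.

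The main obstacle I expect is the degree bookkeeping in the reduction/lifting, i.e. making sure that at each stage the relevant ideal already coincides with $R$ \emph{in the degree one less than the target}, so that one application of multiplication-by-$L$ suffices — this requires that the inductive hypotheses be stated with the sharp degree bound ($d\ge tr(A)/2$, not merely "some large degree"), and that Lemmas \ref{WL4} and \ref{step4} apply in exactly that range. The secondary subtlety is that after inserting or scaling rows and columns the matrices become \emph{special} rather than general, so at the very end one must pass back to a general choice of the $G_i$ by semicontinuity of the dimension of the span of the associated tangent spaces (equivalently, of the Hilbert function of the ideal of submaximal pfaffians in degree $d$), exactly as in the proof of Proposition \ref{step3}.
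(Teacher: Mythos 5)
Your proposal is correct and follows essentially the same route as the paper: the constant (all-ones) case is handled by induction on $k$ via inserting the rows $(0,L,0,\dots,0)$ and $(-L,0,\dots,0)$ (this is exactly Theorem \ref{linmain}), while a general degree matrix is reached by repeatedly subtracting $1$ from a row and column, lifting back by multiplying the corresponding row and column of the $G_i$ by a general linear form, and closing the gap in the top degree with the Lefschetz-type surjections of Lemmas \ref{WL4} and \ref{step4} applied to the odd-size truncations. The only organizational difference is that the paper runs a single induction on $\mathrm{tr}(A)$ with the constant matrices (zero or all-ones, according to the parity of the diagonal) as base cases, rather than an induction on $k$ patched by the trace reduction --- which is precisely the repair you identify when you note that the inserted linear rows force the first row of $A$ to consist of ones.
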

\begin{proof} We may assume $k\geq 2$, the case $k=1$ being trivial.
We make induction on the trace of $A$.

After Remark \ref{order}, we know that the entries in the
diagonal of the matrix $A$ are either all even or all odd.
In the first case, we use as basis for the induction the
null matrix $A$ (for which the claim is obvious). In the
latter case we use a matrix with all entries equal to $1$
(for which the claim follows by Theorem \ref{linmain}).

In the inductive step, let $A$ be ordered and call $B$
the matrix obtained by $A$ by subtracting $1$ from the first row and 
the first column (thus subtracting $2$ from the upper-left
element, so that $tr(B)=tr(A)-2$).
As the theorem holds for $B$, for a general choice
of skew-symmetric matrices $G_1,\dots, G_k$ with
$\partial G_i=B$, the ideal $I$ generated by the submaximal
pfaffians of the $G_i$'s coincides
with the ring $R$ in degree $d\geq tr(A)/2-1$.

Let $G'_i$ be the matrix obtained from $G_i$ by erasing
the first row and column and call $I_0$ the ideal
generated by the $(2k-2)\times(2k-2)$ pfaffians of all
the $G'_i$'s.  The degree matrix $A'=(a'_{ij})$
of the $G'_i$'s is the matrix obtained from $A$ by
deleting the first row and column. Since $A$ is ordered, 
we have $tr(A)\geq a'_{11}+tr(A')$. Thus, by Lemma \ref{step4}
the multiplication by a general linear form $L$
determines a surjection $(R/I_0)_{d-1}\to (R/I_0)_d$,
for  $d\geq tr(A')+a'_{11}$, hence also for $d\geq  tr(A)$. 

Let $H_i$ be the matrix obtained from $G_i$ by multiplying
the first row and column by a general linear form $L$.
Then $\partial H_i=A$ and the ideal $I'$ generated by
the submaximal pfaffians of the $H_i$'s contains
both $I_0$ and $LI$.

The claim follows.
\end{proof}

Our main result follows now from Remark \ref{reduc}.

\begin{thm}\label{maingen} 
Fix a $2k\times 2k$
symmetric homogeneous matrix $A$ of non-negative
integers, with trace $2d$.
Then a general form $F$ of degree $d$ in four
variables is the sum of the pfaffians of $k$ skew-symmetric
matrices of forms, with degree matrix $A$.
\end{thm}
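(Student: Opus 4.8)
\textbf{Proof plan for Theorem \ref{maingen}.}
The plan is to deduce the statement directly from the algebra-to-geometry dictionary already set up in the paper. By Remark \ref{reduc}, the assertion that a general quaternary form $F$ of degree $d=\mathrm{tr}(A)/2$ is a sum of $k$ pfaffians of skew-symmetric $2k\times 2k$ matrices of forms with degree matrix $A$ is \emph{equivalent} to the assertion that, for a general choice of $k$ such matrices $G_1,\dots,G_k$, the ideal generated by all the submaximal pfaffians of the $G_i$'s equals the whole polynomial ring $R=\CC[x,y,z,t]$ in degree $d$. But that is exactly the content of Theorem \ref{submain}, applied with the given $A$ (so that $\mathrm{tr}(A)/2=d$ is precisely the degree in which the surjectivity onto $R_d$ is asserted). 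Hence the proof is a one-line invocation: combine Theorem \ref{submain} with Remark \ref{reduc}.

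First I would state that $2d=\mathrm{tr}(A)$ by hypothesis, so $d=\mathrm{tr}(A)/2$ and $F\in R_d$ with $R_d=\PP^N$, $N=N(d)$. Next I would recall that, via Proposition \ref{tg} and Terracini's Lemma, the span of the tangent spaces to the pfaffian variety $V$ at $k$ general points $F_1=\pfaff(G_1),\dots,F_k=\pfaff(G_k)$ is exactly the linear span (in $R_d/\langle F_1,\dots\rangle$, but equivalently as a subspace of $R_d$) of the degree-$d$ part of the ideal generated by the submaximal pfaffians of all the $G_i$'s. Then I would cite Theorem \ref{submain}: for general $G_1,\dots,G_k$ with $\partial G_i=A$, that ideal coincides with $R$ in every degree $d\geq \mathrm{tr}(A)/2$, in particular in degree $d=\mathrm{tr}(A)/2$. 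Therefore the span of the $k$ general tangent spaces is all of $\PP^N$, i.e.\ $\sigma_k(V)=\PP^N$, which by Remark \ref{reduc} means the general form of degree $d$ is a sum of $k$ pfaffians of matrices with degree matrix $A$.

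There is essentially no obstacle left at this stage: the entire technical content — the inductive descent on $\mathrm{tr}(A)$, the base cases (null matrix in the even-diagonal case, all-ones matrix in the odd-diagonal case via Theorem \ref{linmain}), and the surjectivity of multiplication by a general linear form supplied by Lemma \ref{step4} — has already been absorbed into Theorem \ref{submain}. The only thing one must be slightly careful about is bookkeeping: checking that Remark \ref{reduc} is being applied with the correct matching of ``$s$'' to ``$k$'' and of the degree $d$ to $\mathrm{tr}(A)/2$, and noting that homogeneity of $A$ guarantees $\pfaff(G_i)$ is indeed a form of the single degree $d$ for each $i$ (as observed in Section \ref{geom}). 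So the proof is simply: \emph{Immediate from Theorem \ref{submain} and Remark \ref{reduc}.}
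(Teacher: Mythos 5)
Your proposal is correct and is exactly the route the paper takes: the paper states that Theorem \ref{maingen} ``follows now from Remark \ref{reduc}'', i.e.\ it is the immediate combination of Theorem \ref{submain} with the equivalence of Remark \ref{reduc}, which is precisely your one-line argument. Your additional bookkeeping (matching $s=k$, $d=\mathrm{tr}(A)/2$, and recalling the Terracini/tangent-space dictionary) is accurate and consistent with the paper.
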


In other words, we obtain $s(A)\leq k$.

\section{Sharpness}\label{impro}

It is very reasonable to ask how far is the bound 
for $s(A)$ given in Theorem \ref{maingen} to be sharp. 

This can be answered by computing the dimension
of the (projective) variety $V$ of forms which are the pfaffian
of a single skew-symmetric matrix $G$.

\begin{rem}
When $A$ is $4\times 4$, then the bound $s(A)=2$  is
sharp for most values of the entries of $A$, as explained in 
\cite{CCG2}.
\end{rem}

As the size $2k$ of $A$ grows, however, the given bound is
probably no longer sharp.

For instance, when all the entries of $A$ are $1$'s
(so we deal with skew-symmetric matrices of linear forms),
then formula 3.6 and the exact sequence 3.1 of \cite{CF2} 
show  that $\dim V = 4k^2 + o(k)$. So one  expects,
 at least for $k\gg 0$, that the $s$-secant variety
of $V$ fills the space of all forms of degree $k$ 
as soon as $s\geq k/24 + o(k)$. In other words,
we can state the following.

\begin{conj} A general form of degree
$k\gg 0$ can be expressed as a sum of $s$ pfaffians
of skew-symmetric $2k\times 2k$ matrices of linear forms,
for $$s = \frac k{24} +o(k).$$
\end{conj}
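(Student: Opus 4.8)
The plan is to attack the conjecture through the same Terracini reduction used throughout the paper, but now extracting the \emph{exact} asymptotic growth rate of the secant varieties of $V$ rather than the crude bound $s(A)\le k$. By Terracini's Lemma and Remark \ref{reduc}, a general quaternary form of degree $k$ is a sum of $s$ pfaffians of skew-symmetric $2k\times 2k$ matrices of linear forms exactly when, for a general choice of such matrices $G_1,\dots,G_s$, the ideal $I$ generated by the submaximal pfaffians of all the $G_i$'s satisfies $(I)_k=R_k$; equivalently, the span of the $s$ tangent spaces $T_{F_i}V$, each described by Proposition \ref{tg}, fills $\PP^N$. So the entire problem becomes: find the least $s$ for which $s$ general tangent spaces to $V$ span $\PP^N$.

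First I would fix the numerology. Here $N=\binom{k+3}{3}-1=\tfrac{k^3}{6}+o(k^3)$, while by formula $3.6$ and the exact sequence $3.1$ of \cite{CF2} the variety $V$ of single pfaffians satisfies $\dim V=4k^2+o(k)$. If $V$ were \emph{non-defective}, Terracini's Lemma would give $\dim\sigma_s(V)=\min\!\big(s(\dim V+1)-1,\,N\big)$, and the right-hand side reaches $N$ exactly when
$$ s \;\ge\; \frac{N+1}{\dim V+1} \;=\; \frac{k^3/6+o(k^3)}{4k^2+o(k^2)} \;=\; \frac{k}{24}+o(k). $$
Thus the conjecture is equivalent to the statement that the secant varieties of $V$ are of (essentially) expected dimension, and all the content lies in proving this near-non-defectivity.

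To that end I would run a degeneration and induction in the spirit of the Horace method, using the multiplication-by-a-linear-form surjections of Lemmas \ref{WL4} and \ref{step4} as the engine. Fix a general plane $H=\{L=0\}$ and specialize a controlled number of the matrices so that their associated codimension-$3$ arithmetically Gorenstein point-schemes (produced from the submaximal pfaffians via Buchsbaum--Eisenbud, \cite{BE}) degenerate onto $H$. On $H$ the problem restricts to ternary forms, where Theorem \ref{tre} and Proposition \ref{step3} give complete control of when submaximal pfaffians fill the ring; off $H$ one is left with a residual configuration of strictly smaller degree, treated by the inductive hypothesis on $k$. The surjectivity of $(R/I_0)_{d-1}\xrightarrow{L}(R/I_0)_d$ is exactly what makes the restricted and residual contributions add up, as in the proof of Theorem \ref{linmain}; the new point is that one must now track \emph{dimensions} quantitatively and show that the accumulated defect stays $o(k^3)=o(N)$. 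Since with $s=k/24+\eps(k)$ matrices the expected span overshoots $N$ by $\Theta(\eps\,k^2)$, any defect of size $o(k^3)$ is absorbed by a suitable choice of $\eps=o(k)$, which is precisely the slack the conjecture allows.

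The main obstacle is exactly this quantitative control of the defect. The inductive argument behind $s(A)\le k$ advances only one degree per matrix and is hopelessly lossy for the sharp rate; an efficient argument must instead exploit that a \emph{single} tangent space already furnishes $\Theta(k^2)$ independent directions --- being the degree-$k$ piece of the ideal of a Gorenstein set of points --- and that $k/24+o(k)$ generic such $4k^2$-planes, stacked inside a space of dimension $\tfrac{k^3}{6}+o(k^3)$, stay in near-general position. The delicate interplay is between the skew-symmetric pfaffian structure and the Horace specialization: one must verify that pushing the Gorenstein schemes onto $H$ neither forces unexpected incidences among the submaximal pfaffians nor drops the dimension of the restricted and residual tangent spaces below their expected values, so that the loss at each step is $o(k^2)$ on average. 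Bounding these error terms uniformly in $k$ is the step I expect to be genuinely hard, and is presumably why the sharp rate is stated only as a conjecture.
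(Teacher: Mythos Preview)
The paper offers no proof of this statement: it is explicitly labelled a \emph{Conjecture}, and the surrounding text provides only the heuristic dimension count. Your ``numerology'' paragraph reproduces that heuristic exactly --- $\dim V = 4k^2 + o(k)$ from \cite{CF2}, $N = k^3/6 + o(k^3)$, hence the expected $s = k/24 + o(k)$ under the assumption of non-defectivity --- and that is the full extent of the paper's argument. There is nothing further to compare against.

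Everything past your first two paragraphs is therefore not an alternative to the paper's proof but a speculative programme for attacking an open problem. As such it is reasonable in outline, and you are honest about where it breaks: the Horace--Terracini machinery of Lemmas~\ref{WL4} and~\ref{step4} is built to certify \emph{surjectivity} (a qualitative statement), not to track dimensions to within $o(k^3)$, and the inductive step in Theorem~\ref{linmain} consumes one matrix per degree increment, which is exactly the inefficiency that yields $s\le k$ rather than $s\sim k/24$. Turning that into a quantitative argument would require genuinely new control on how independently the degree-$k$ pieces of several Gorenstein ideals sit inside $R_k$, and nothing in the paper (or, to my knowledge, in the literature) supplies that. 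Your closing sentence is the correct assessment: this is why the statement remains a conjecture.
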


Notice that our bound $s=k$ is already linear in $k$,
but with a larger coefficient.
\medskip

The same phenomenon is expected to occur for other types of 
homogeneous symmetric matrices $A$ of large size.

For instance, if all the entries of $A$ are equal to
a constant $b\gg k$, 
then formula 3.6 and the exact sequence 3.1 of \cite{CF2}
tell us that $\dim(V) = k^2b^3/3 +o(b)$. Thus the expected
value $s(A)$ such that the $s(A)$-secant variety of $V$ fills 
the space of forms of degree $kb$ is
$s(A) = k/2 + o(k)$, which is (asymptotically) 
$1/2$ of our bound.
\smallskip

We hope that a refinement of our method will provide,
in a future, advances towards sharper bounds for $s(A)$.

\end{document}